\date{\today}
\begin{document}
\begin{abstract}

Chatawate (Flame) Ruethaimetapat was a passionate, enthusiastic, and wonderful person who passed away in August of 2024. At the time of their passing they were working towards their PhD, specializing in geometric group theory. Flame was just as excited about learning new mathematics as they were about sharing it with everyone else, so it's no surprise that they spent a lot of time thinking about how to write down expository proofs of classical theorems that would be accessible for first year students. In particular, they sought a simple, elementary proof of the fact that any finitely generated group quasi-isometric to the integers is virtually $\mathbb{Z}$. In the spirit of this endeavor and in loving memory of Flame, we present such a proof here.
\end{abstract}

\title{Quasi-isometric rigidity of the integers: an elementary primer}
\input{authors.sty}
\maketitle
\section*{Introduction}

In this article, we present a completely elementary proof of the following well-known theorem in geometric group theory.

\begin{maintheorem} \label{thm:main} Let $G$ be a finitely generated group that is quasi-isometric to $\mathbb{R}$; then $G$ is virtually $\mathbb{Z}$. 
\end{maintheorem}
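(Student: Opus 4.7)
Since $\mathbb{R}$ and $\mathbb{Z}$ are quasi-isometric, I assume a $(K,C)$-quasi-isometry $f \colon G \to \mathbb{Z}$, normalized so that $f(e) = 0$. The plan is to produce an element $g \in G$ of infinite order for which $\langle g \rangle$ has finite index in $G$; the theorem follows. As preliminaries, preimages of points under $f$ have diameter at most $KC$ in $G$, so finite generation forces the balls $B_n(e) \subset G$ to have cardinality $O(n)$, and the ``slabs'' $S_n := f^{-1}(n)$ are uniformly bounded finite sets whose union is all of $G$. In particular $G$ is infinite and has linear growth.

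The central step is to produce an element $h \in G$ whose iterates translate along $f$ by a definite amount. Because each left multiplication $L_h \colon G \to G$ is an honest isometry of the Cayley graph, the displacement $x \mapsto f(hx) - f(x)$ is uniformly bounded (by $K|h|_S + C$). In particular the consecutive differences $f(h^{k+1}) - f(h^k)$ are bounded, and after verifying that the asymptotic translation number $\tau(h) := \lim_k f(h^k)/k$ is well defined, one picks $h$ with $|f(h)|$ so large that $\tau(h) \neq 0$. Such $h$ exists because $G$ is infinite while each slab $S_n$ is finite, forcing elements of arbitrarily large $|f|$ to live in $G$. For this $h$, the values $f(h^k) \to \pm \infty$, so the powers $h^k$ are distinct and $h$ has infinite order.

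Finally, linear growth and nonzero translation combine to give finite index. Setting $g := h$, for any $y \in G$ one chooses $k \in \mathbb{Z}$ so that $|f(yg^{-k})|$ is bounded (possible because $f(yg^{-k})$ grows linearly with $-k$, with error bounded uniformly in $y$). Then $yg^{-k}$ lies in a fixed finite ball $F \subset G$, so $G = F \cdot \langle g \rangle$ is a union of at most $|F|$ cosets of $\langle g \rangle$, proving $G$ virtually $\mathbb{Z}$. The main obstacle is the central step: rigorously defining $\tau$ and verifying the uniform-displacement property needed for the coset count. For a first-year audience the paper most likely avoids the language of homogeneous quasi-morphisms, instead running an explicit pigeonhole over the finitely many ``shapes'' that $L_h$ can induce from one slab to a nearby one; this is where the finite cardinality of each slab, established in the first paragraph, plays its essential role.
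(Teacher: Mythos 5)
There is a genuine gap at what you yourself identify as the central step: the claim that one can ``pick $h$ with $|f(h)|$ so large that $\tau(h) \neq 0$'' is false as stated. Large displacement of $e$ does not rule out that $h$ acts as a coarse \emph{reflection} of the line rather than a coarse translation. The infinite dihedral group $D_\infty = \langle a, t \mid t^2 = e,\ tat^{-1} = a^{-1} \rangle$ is quasi-isometric to $\mathbb{Z}$, and the elements $h = a^n t$ have $|f(h)|$ arbitrarily large yet satisfy $h^2 = e$, so $\tau(h) = 0$ and $h$ is torsion. Relatedly, your bound on the displacement $x \mapsto f(hx) - f(x)$ is $K|h|_S + C$, which grows with $|f(h)|$, so bounded consecutive differences $f(h^{k+1}) - f(h^k)$ do not prevent $f(h^2)$ from landing back near $0$. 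More structurally, $f$ is not a quasimorphism on all of $G$: for reflection-like $g$ one has $f(gh) - f(g) \approx -f(h)$, so the limit defining $\tau$ need not behave as you expect without first excluding such elements.

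The paper's proof devotes its entire second step to exactly this issue: it shows each $g \in G$ either preserves or swaps the two ends $\{\pm\infty\}$ of $\mathbb{R}$, obtains a homomorphism $G \to \mathbb{Z}_2$, and passes to its kernel, an index-at-most-$2$ subgroup of end-preserving (``translation-like'') elements. Only \emph{within that kernel} does it choose $g$ with large displacement, and a separate order-preservation lemma (that $g * x < g*(x+n)$ for $n$ large) is what guarantees the powers $g^z * 0$ march monotonically off to infinity rather than folding back. Your first paragraph (slabs, linear growth) and your final coset-counting paragraph are fine and essentially match the paper's conclusion, but the argument cannot be repaired without inserting something equivalent to this action on the ends, or at minimum replacing $h$ by $h^2$ after proving that squares are always end-preserving and that this does not collapse the displacement.
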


There are many ways to conclude this result using modern machinery. Our goal here is to offer a proof that does not rely on any other results. It is a proof written ``from scratch" so to speak. It is meant to provide students entering the subject with a solid understanding of the notion of quasi-isometry - i.e. what it does and does not give you. At the end of the proof, we offer a few ways to see this result using well-known concepts and results from geometric group theory that one could explore from here. 

\subsection*{Flame}

We were brought to this project by our various connections to Chatawate (Flame) Ruethaimetapat, a beloved student and teacher of mathematics who passed away in the Summer of 2024. While studying for their candidacy exam in geometric group theory, Flame was particularly enthralled by Theorem \ref{thm:main} and became very interested in devising a proof that could be easily explained to a student with no background whatsoever in geometric group theory. The argument we present below was inspired by various pictures and ideas with which Flame was playing while working towards this goal. 

Flame's passion for exposition and for sharing the joy of experiencing mathematical insight was facilitated through their art work. At Haverford College, they double-majored in mathematics and art, and their senior thesis was recognized by the mathematics department for its beautiful and clear hand-drawn figures. While none of us possess even remotely close to Flame's level of artistic skill, in the spirit of their memory we chose to hand-draw our own figures. We highlight some of Flame's mathematical drawings in Appendix C, but we begin the paper with the drawing below which Flame created while thinking about Theorem \ref{thm:main}.

\begin{center}
\includegraphics[height=4cm]{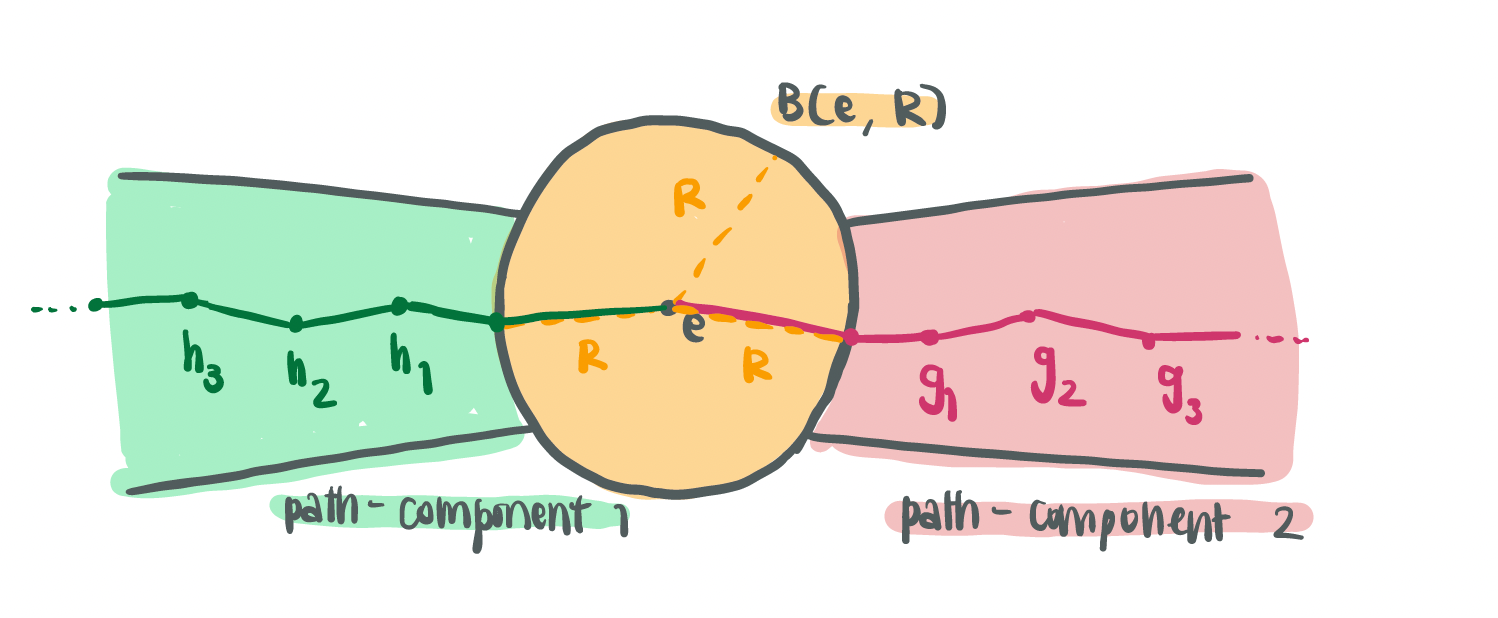}
\end{center}

\subsection*{What you'll need to read this and what you'll find in it (and what you won't)} To understand the sketch of Theorem \ref{thm:main}, a reader will only need a comfortable understanding of group theory at the level of an undergraduate course. In particular, we rely on the notions of a generating set for a group and a group action on a set. The reader will be introduced to the concepts of \textbf{quasi-isometries} between groups and spaces and of course to the central idea in geometric group theory of treating the (a) Cayley graph of a group as a space whose geometry encodes information about the algebraic properties of the group. 

We stress that this is \textit{not} the place for an ambitious undergraduate to find a comprehensive introduction to all of modern geometric group theory. We refer the reader to other excellent resources (for instance \cite{clay2017office}) if this is what they are after. It is also not the place for a more extensive graduate-level review of the field (one could look to \cite{bridson_metric_1999}, \cite{farb2011primer}, \cite{meier_groups}, or \cite{loh_geometric_group_theory} for something like this). Rather, our goal is to present one of the simplest theorems in geometric group theory that is also deeply interesting, and to do it in a way that introduces an uninitiated reader to some of the basic and central concepts of the field. 

We hope, more than anything, that this piece encourages young graduate students to share the ideas with each other and with younger students and  undergraduates in their orbit. This is what Flame would have wanted.

\subsection*{Outline} In the next section we will introduce the basic players and discuss a quasi-action on $\mathbb{R}$. Then we will describe how this in turn gives rise to an action on the two ``ends" of the real line. Finally, we will hunt for (and find) a copy of $\mathbb{Z}$ in our group and we will verify that the entire group is quasi-isometric to this. 

We include several appendices: in Appendix A, we discuss some ideas for alternative proofs. In Appendix B, we summarize some ideas (largely inspired by an old MathOverflow post of Thurston \cite{MathOverflowPost}) for proving a similar conclusion but only under the weakened hypothesis that $G$ has linear growth. Finally, in Appendix C, we include some of Flame's original hand-drawn mathematical figures. 

\section*{Quasi-Isometry}
We begin by introducing the notions of a \textbf{Cayley Graph} and a \textbf{Quasi-Isometry}. A reader familiar with these definitions can move on to the proof outline in the next section.

In a first course in group theory, the notion of a group is often geometrically motivated. In the world of finite groups, the dihedral groups can be given as rigid symmetries of a regular polygon, while infinite groups such as the general linear groups can be given as a special group of symmetries of $\R^n$. These are often given as first examples, since being able to `see' the group act on a space can help us understand its structure more deeply.

But when we axiomatically define groups, we risk losing some of that geometric structure. For instance, what is the right geometric interpretation for the following group?
\[\langle a,b\:|\: bab^{-1}=a^2\rangle\]

At a glance, this group looks fairly uncomplicated. It is given by just two generators and one relation. But even so, it's hard to see it geometrically just by looking at the symbols in its presentation. See Appendix C for a picture of part of this Cayley graph.

The first definition we present is a geometric group theorist's most fundamental tool for tackling this issue. How can we look at any group and find some geometric object to represent it?

\begin{definition}\label{def:CayleyGraph}
    For a group $G$ with some generating set $S$, the \textbf{Cayley Graph} $\Gamma_{S}$ is built as follows:
    \begin{enumerate}
        \item Create a vertex for every element of the group.
        \item Connect vertex $g$ to vertex $g'$ by an edge exactly when there is some $s \in S$ so that either $g = sg'$ or $g'= sg$. Label that edge with the appropriate generator and orient it so that left multiplying the initial vertex by the generator results in the final vertex.
    \end{enumerate}
    The Cayley Graph is given a metric $d_{\Gamma_{S}}$ by making the length of every edge exactly $1$.
\end{definition}

For a single group, there are in general many different Cayley graphs. For instance, Figure \ref{fig:CayleyGraphs} shows two Cayley graphs for the same group $\Z$. The top graph connects two vertices when they differ by $1$, the bottom graph connects two vertices when they differ by $2$ or $3$.

\begin{figure}[b]
    \centering
    \includegraphics[width=0.5\linewidth]{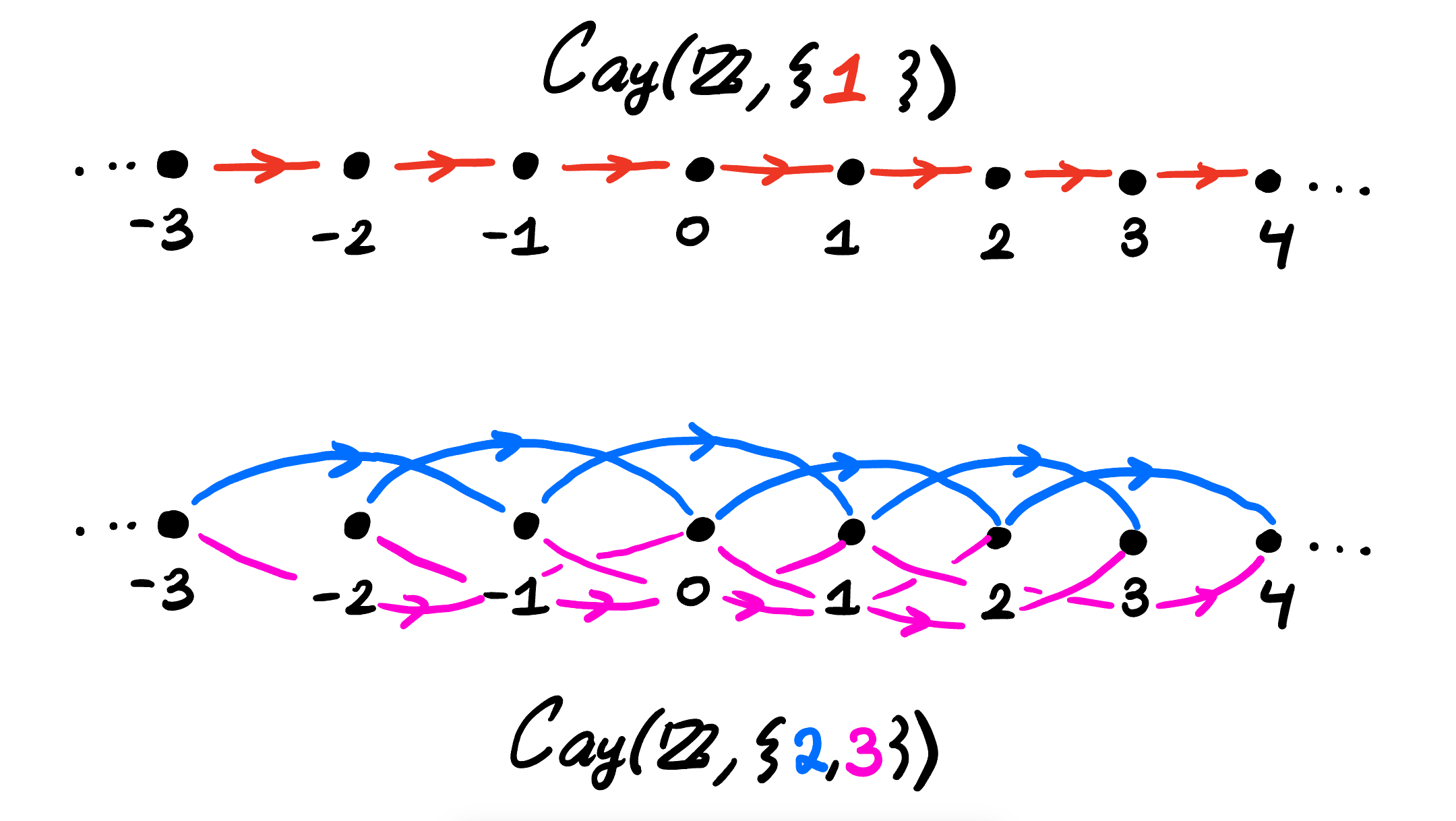}
    \caption{Two Cayley Graphs for $\Z$}
    \label{fig:CayleyGraphs}
\end{figure}

A key point we would like to emphasize is that $G$ acts on $\Gamma_{S}$ via left multiplication, and this action is by isometries with respect to the metric $d_{\Gamma}$: 
\[ d_{\Gamma_{S}}(g, g') = d_{\Gamma_{S}}(hg, hg'), \forall g,g',h \in G. \]

But even though a group doesn't have a unique Cayley graph, if we only look at finite generating sets they all share something in common. We formalize this below:

\begin{definition}
    A map $f:X\to Y$ between metric spaces is called a $(\lambda,\epsilon)$ quasi-isometric embedding if there exists $\lambda\geq 1$ and $\epsilon\geq 0$ so that for all $a,b \in X$:
    \[\frac{1}{\lambda}d_X(a,b)-\epsilon\leq d_Y(f(a),f(b))\leq \lambda d_X(a,b)+\epsilon.\]
\end{definition}

In other words, a quasi-isometric embedding is a map between metric spaces that can only alter distances a `small' amount. The constants $\lambda$ and $\epsilon$ encode numerically how much error is allowed.

\begin{figure}[b]
\centering
    \begin{subfigure}{0.45\textwidth}            
            \includegraphics[width=\textwidth]{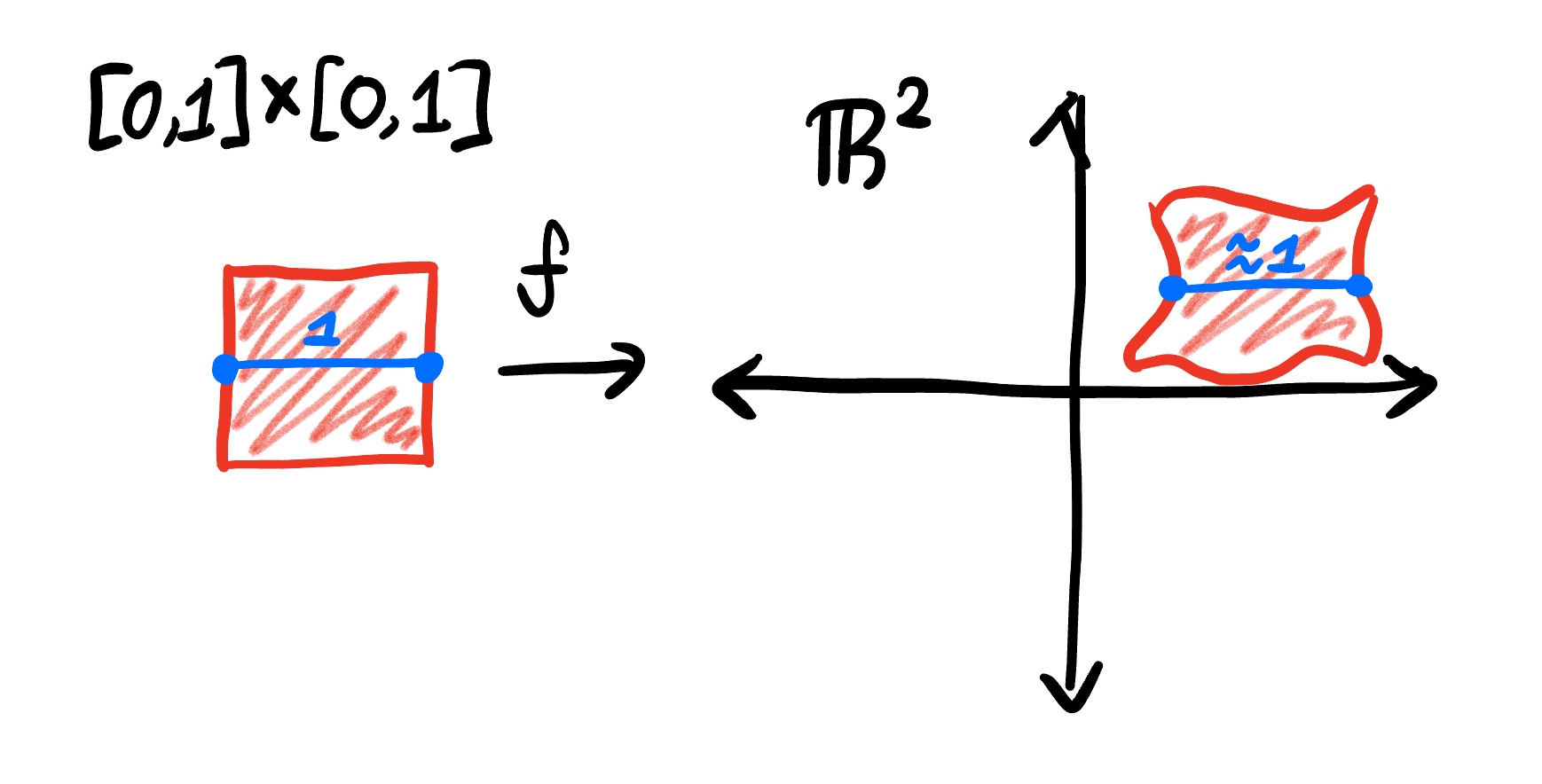}
            \caption{A Quasi-Isometric Embedding from the Square to the Plane}
            \label{fig:QIEmbedding}
    \end{subfigure}%
     \quad 
    \begin{subfigure}{0.45\textwidth}
            \centering
            \includegraphics[width=\textwidth]{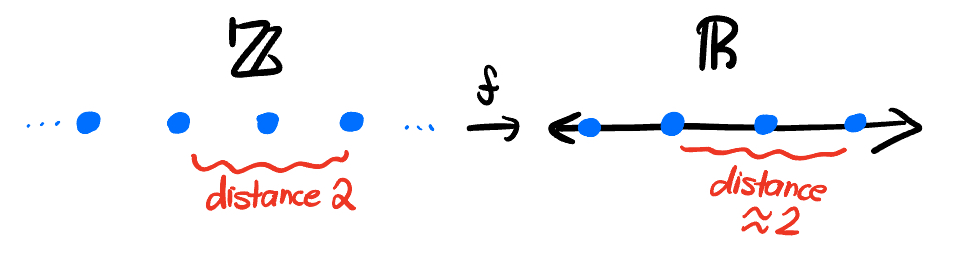}
            \caption{A Sketch of a Quasi-Isometry from $\Z$ to $\R$.}
            \label{fig:QI}
    \end{subfigure}
    \caption{Quasi-Isometry}\label{}
\end{figure}

We also want some notion of when a map is almost surjective, which is given by the following:
\begin{definition}
    A subset $Y \subseteq X$ is called \textbf{quasi-dense} if there is a $k\geq 0$ so that every point in $X$ is within distance $k$ of a point in $Y$.
\end{definition}

If a quasi-isometric embedding has quasi-dense image, we call it a \textbf{quasi-isometry}.

\begin{remark}
From the point of view of topology, quasi-isometries do not have the same level of rigidity as isometries. For example, quasi-isometries are not necessarily homeomorphisms, and they are often non-continuous and non-invertible. But they do still act like an equivalence between spaces in the following sense:
    \begin{enumerate}
        \item The composition of two quasi-isometries is a quasi-isometry.
        \item For any quasi-isometry $f:X\to Y$, there is a quasi-isometry $f^{-1}:Y\to X$ called a \textbf{quasi-inverse}. This map is not truly a setwise inverse, but has the property that $f^{-1}\circ f:X\to X$ and $f\circ f^{-1}:Y \to Y$ moves points at most distance $k$ for some fixed $k>0$.
    \end{enumerate} 
\end{remark}

This notion of quasi-isometry turns out to be the right one for thinking about finitely generated groups, since if $\Gamma$ and $\Gamma'$ are two Cayley graphs with finite generating sets for a group $G$, then they are quasi-isometric to one another.\footnote{The quasi-isometry between $\Gamma$ and $\Gamma'$ is the map which takes any vertex in $\Gamma$ to the vertex in $\Gamma'$ representing the same element, and takes an edge in $\Gamma$ to the shortest path in $\Gamma'$ between its initial and final vertices.}

With this information, can now treat a finitely generated group as a single geometric object. Thus, when we say ``$G$ is quasi-isometric to $\R$" in the theorem statement, we mean ``some (and thus any) Cayley graph for $G$ with finite generating set has a quasi-isometry to $\R$". 

\section*{The set-up} \label{set-up}
Fix a finite generating set for $G$ and let $\Gamma$ denote the Cayley graph for $G$ with respect to this generating set. We present the proof in $4$ steps: 

\begin{itemize}
\item Step 1: Use the quasi-isometry to $\mathbb{R}$ to turn the group action $\bullet:G \times \Gamma \to \Gamma$ into a map $*:G \times \mathbb{R} \to \mathbb{R}$ that coarsely captures the information of the group action.

\vspace{2 mm}

\item Step 2: Use the map $*$ to construct a group action on $\{\pm \infty\}$, the two ends of the real line. This will allow us to isolate the elements of $g$ that coarsely act as ``translations" on the line.

\vspace{2 mm}

\item Step 3: Show that for some $g \in G$, the orbit $\{g^n*0\}_{n \in \mathbb{Z}}$ is quasi-dense in $\mathbb{R}$. Then use the relationship between $*$ and $\bullet$ to show that $\{g^n \bullet e\}_{n \in\mathbb{Z}}$ is quasi-dense in $\Gamma$.  This $g$ generates the $\mathbb Z$ of finite index in $G$.

\end{itemize}

\section*{Step 1: Passing from $\Gamma$ to $\R$} 
Let us begin by fixing some notation that we will use throughout this proof.
\begin{notation}
Let $\Gamma$ be a Cayley graph for $G$ with respect to some fixed generating set. 
    \begin{enumerate}[(A)]
    \item Let $\varphi:\Gamma \to \mathbb{R}$ be our $(\lambda,\epsilon)$ quasi-isometry. By post-composing by a translation if necessary, we can assume without loss of generality that $\varphi(e)=0$.
    \item Let $\varphi^{-1}$ be the quasi-inverse of $\varphi$. We can assume that $\varphi^{-1}(0)=e$ and $\varphi^{-1}$ is also a $(\lambda,\epsilon)$ quasi-isometry. 
    \item By the definition of the quasi-inverse, we can find a $k\geq 0$ so that that $d(x,\varphi^{-1}\circ\varphi(x))\leq k$ for all $x$. For notational convenience, we can assume without loss of generality that $k=\epsilon$.
    \item Let $\bullet:G \times \Gamma \to \Gamma$ be the action of $G$ on its Cayley graph by isometries. 
\end{enumerate}
\end{notation}

Now, our overall goal is to find a copy of $\Z \leq G$ so that $\Z \bullet \{e\}$ lies as a quasi-dense subset of $\Gamma$. But we quickly run into an issue, as nothing is given to us about the group structure of $G$. How do we even know that there is a copy of $\Z$ in $G$ at all? Luckily, we do have a quasi-isometry to the space $\R$, which is (quasi-isometric to) the Cayley graph of $\Z$ with its standard generating set, $\left\{ 1 \right\}$. Maybe if we turn the group action $\bullet$ on $\Gamma$ into a nice enough map on $\R$, we can push forward properties of $\Gamma$ and $\R$ to analogous properties of the corresponding groups, $G$ and $\Z$. 

We do this using the map $*$, which in turn calls on the quasi-isometry $\varphi$ to translate between $\mathbb{R}$ and $\Gamma$.

\begin{definition}
The map $*$, as illustrated in Figure \ref{fig:Star_Action}, is given by:
\begin{align*}
    *:G \times \mathbb{R} &\to \mathbb{R}\\
    (g,x)&\mapsto \varphi\circ g\bullet(\varphi^{-1}(x))
\end{align*}
\end{definition}

\begin{figure}[h!]
    \centering
    \includegraphics[width=0.5\linewidth]{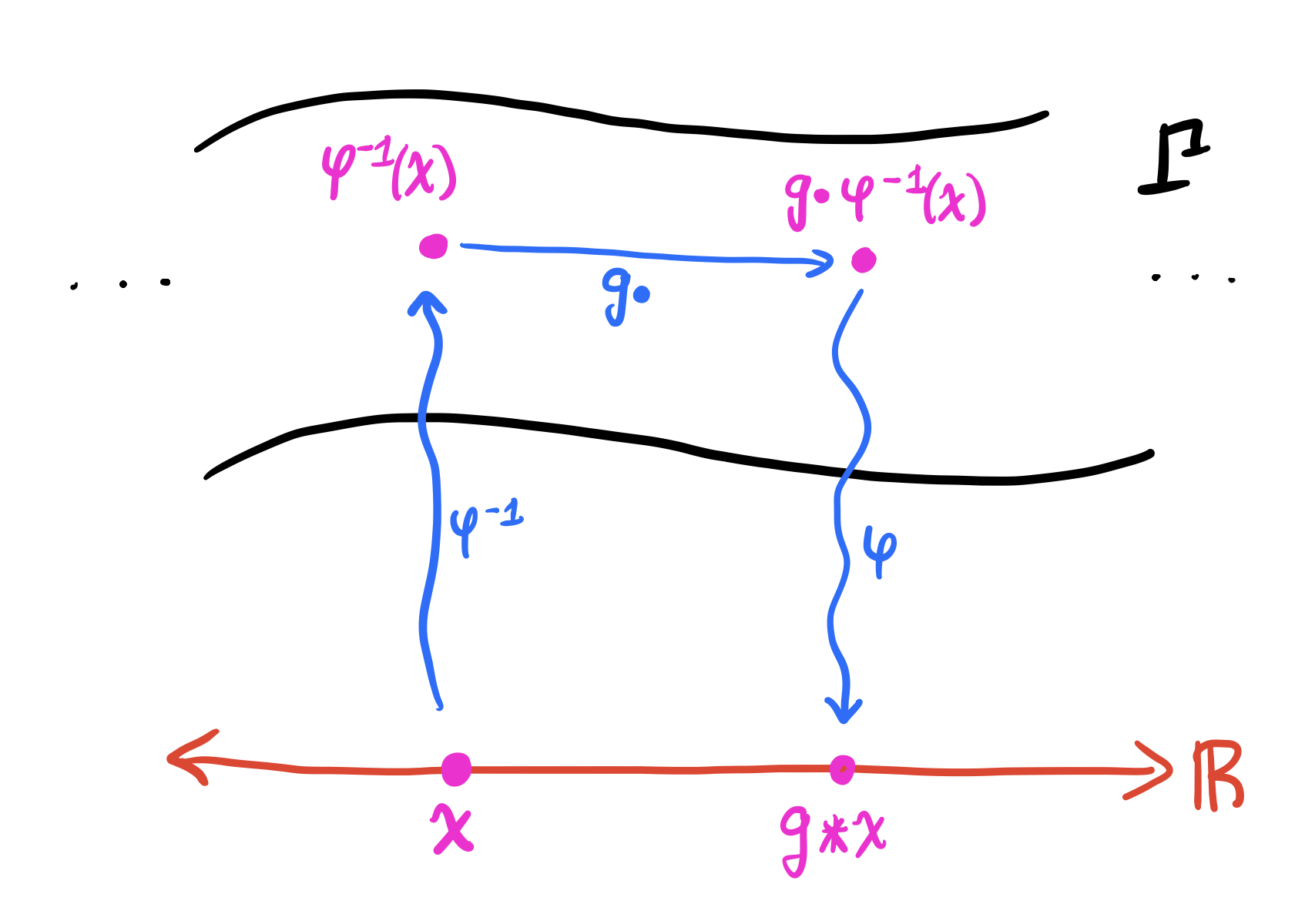}
    \caption{The map $*$ applied to an element of $\R$}
    \label{fig:Star_Action}
\end{figure}

Using the fact that $*$ is a composition of isometries and quasi-isometries, we have a great deal of control over where it sends elements.

\begin{lemma} \label{lem:four properties}
    The map $*$ has the following properties:
    \begin{enumerate}[(A)]
    \item This map behaves like a group action, up to a small error.
    \begin{enumerate}[(1)]
        \item $d_\mathbb{R}((gh)*x,g*(h*x))\leq \epsilon+\lambda\epsilon$.
        \item $d_\mathbb{R}(e*x,x)\leq \epsilon$.
    \end{enumerate}
    \item For each $g \in G$, the self-map of $\R$ given by $*(g, \cdot)$ is a quasi-isometry:
    \begin{enumerate}
        \item[(3)] $\frac{1}{\lambda^2}d_{\mathbb{R}}(x,y)-\frac{\epsilon}{\lambda}-\epsilon \leq d_\mathbb{R}(g*x,g*y)\leq \lambda^2 d_\mathbb{R}(x,y)+\lambda\epsilon+\epsilon$.
    \end{enumerate}
    \item The vertex $g \bullet e$ of $\Gamma$ lies nearby to the vertex $\varphi^{-1}(g *0)$:
    \begin{enumerate}
        \item[(4)] $d_\Gamma(g\bullet e,\varphi^{-1}(g*0))<\lambda$.
    \end{enumerate}
    \end{enumerate}
\end{lemma}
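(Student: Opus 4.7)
The plan is to derive all four claims by unpacking $g*x = \varphi(g\bullet\varphi^{-1}(x))$ and chaining the three available controls: the $(\lambda,\epsilon)$ quasi-isometry estimates for $\varphi$ and $\varphi^{-1}$, the fact that $g\bullet$ is an honest isometry of $\Gamma$, and the almost-inverse bound (C) (used on both the $\Gamma$ side and the $\mathbb{R}$ side, the latter coming for free from the quasi-inverse construction alluded to in the Remark). Nothing here demands cleverness; the whole lemma is bookkeeping, with careful attention to which constants appear additively versus multiplicatively.

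For (2), $e*x = \varphi(\varphi^{-1}(x))$, which is within $\epsilon$ of $x$ by the $\mathbb{R}$-side analog of (C). For (1), I would expand
\[
(gh)*x = \varphi\bigl(g\bullet h\bullet\varphi^{-1}(x)\bigr),\quad g*(h*x) = \varphi\bigl(g\bullet\varphi^{-1}\varphi(h\bullet\varphi^{-1}(x))\bigr),
\]
observe that the two inner $\Gamma$-points differ by at most $\epsilon$ by (C), note that $g\bullet$ preserves this distance exactly because it is an isometry, and then pay a factor of $\lambda$ plus an additive $\epsilon$ for the outer application of $\varphi$. This produces the claimed $\lambda\epsilon + \epsilon$.

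For (3), I would chain three quasi-isometric inequalities through the composition $\varphi\circ (g\bullet)\circ \varphi^{-1}$: the outer $\varphi$ contributes a factor of $\lambda$ and a $+\epsilon$, the isometry $g\bullet$ cancels exactly in the middle, and the inner $\varphi^{-1}$ contributes another factor of $\lambda$ and another $+\epsilon$; this yields the upper bound $\lambda^2 d_\mathbb{R}(x,y) + \lambda\epsilon + \epsilon$. The mirror computation using the lower halves of each quasi-isometry inequality gives the matching lower bound $\frac{1}{\lambda^2}d_\mathbb{R}(x,y) - \frac{\epsilon}{\lambda} - \epsilon$.

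Finally, (4) is essentially immediate: since $\varphi^{-1}(0) = e$ by (B), the definition of $*$ gives $g*0 = \varphi(g\bullet e)$, so $\varphi^{-1}(g*0) = \varphi^{-1}\varphi(g\bullet e)$, which by (C) sits within $\epsilon$ of $g\bullet e$, well inside the stated bound once one relates $\epsilon$ and $\lambda$ (or replaces $\lambda$ by $\max(\lambda,\epsilon)$). If any step of the lemma is genuinely a ``main obstacle,'' it is part (1), where one must resist the temptation to cancel $\varphi\circ\varphi^{-1}$ to the identity inside the nested expression; carrying that almost-cancellation through the outer $\varphi$ is exactly what produces the $\lambda\epsilon$ term.
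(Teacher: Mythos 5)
Your proposal is correct and follows essentially the same route as the paper's proof: unpack the definition of $*$, use that $g\bullet$ is an honest isometry to cancel the group action in the middle, and chain the $(\lambda,\epsilon)$ bounds for $\varphi$ and $\varphi^{-1}$ together with the quasi-inverse estimate (C), yielding exactly the constants $\lambda\epsilon+\epsilon$ in (1) and $\lambda^{2}d_{\mathbb{R}}(x,y)+\lambda\epsilon+\epsilon$ in (3). Your aside on (4) is a fair observation --- the natural bound coming out of (C) is $\epsilon$ rather than $\lambda$, so the stated constant only works after relating the two --- but this is a quibble with the statement, not a gap in your argument.
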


The proof of each part of this lemma will use the same single strategy. We will begin by writing out the definition of $*$, and then we'll consecutively undo the outermost operation.

\begin{proof}
    Throughout this proof, let $g,h \in G$ and $x,y \in \R$ be arbitrary.
    
    Proof of $(1)$: Unpacking the definition of $*$, we have the following:
    \[d_\mathbb{R}((gh)*x,g*(h*x)) = d_\R(\varphi(gh\bullet \varphi^{-1}(x)), \varphi(g\bullet \varphi^{-1}(\varphi(h\bullet \varphi^{-1}(x)))))).\]
    Since $\varphi$ is a quasi-isometry, we have the following upper bound:
    \[d_\mathbb{R}((gh)*x,g*(h*x)) \leq \lambda\color{black} d_\Gamma(gh\bullet \varphi^{-1}(x), g\bullet \varphi^{-1}(\varphi(h\bullet \varphi^{-1}(x)))))+ \epsilon\]
    Since $G$ acts by isometries on $\Gamma$, we can apply $g^{-1}\bullet$ to the distance on the right-hand side and simplify to:
    \[d_\mathbb{R}((gh)*x,g*(h*x)) \leq \lambda d_\Gamma(h\bullet \varphi^{-1}(x), \varphi^{-1}(\varphi(h\bullet \varphi^{-1}(x))))+\epsilon\]

    Since $\varphi^{-1}$ and $\varphi$ are quasi-inverses, applying them one after another can only move a point at most $\epsilon$. Thus, with an application of the triangle inequality we can further simplify to:
    \begin{align*}
        d_\mathbb{R}((gh)*x,g*(h*x)) &\leq \lambda (d_\Gamma(h\bullet \varphi^{-1}(x), h\bullet \varphi^{-1}(x))+ \epsilon\color{black})+\epsilon\\
        &= \lambda (0+\epsilon)+\epsilon\\
        &= \lambda\epsilon+\epsilon,
    \end{align*}
    as desired.

    A directly analogous argument proves $(2)$ and $(4)$ as well.

    Proof of $(3)$: This proof will rely on the fact that $\varphi$ and $\varphi^{-1}$ are $(\lambda,\epsilon)$-quasi-isometries. From the definition, we know that:
    \[d_\R(g*x,g*y)=d_\R (\varphi^{-1}(g\bullet \varphi(x)), \varphi^{-1}(g\bullet \varphi(y))).\]

    Since $\varphi^{-1}$ is a quasi-isometry, we have the following bounds:
    \[\frac{1}{\lambda} d_\Gamma (g\bullet \varphi(x),g\bullet \varphi(y))-\epsilon\leq d_\R(g*x,g*y)\leq \lambda d_\Gamma (g\bullet \varphi(x),g\bullet \varphi(y))+\epsilon \]

    Since $G$ acts by isometries on $\Gamma$, we again apply $g^{-1}\bullet$ to both bounds get:
    \[\frac{1}{\lambda} d_\Gamma ( \varphi(x), \varphi(y))-\epsilon\leq d_\R(g*x,g*y)\leq \lambda d_\Gamma (\varphi(x), \varphi(y))+\epsilon \]

    Finally, since $\varphi$ is a quasi-isometry, we get that:
    \[\frac{1}{\lambda}\left( \frac{1}{\lambda}d_\R ( x, y)-\epsilon\right) -\epsilon\leq d_\R(g*x,g*y)\leq \lambda\left( \lambda d_\R ( x, y)+\epsilon\right)+\epsilon\]

    Which we simplify to:
    \[\frac{1}{\lambda^2}d_\R ( x, y)-\frac{\epsilon}{\lambda}-\epsilon\leq d_\R(g*x,g*y)\leq  \lambda^2 d_\R ( x, y)+\lambda\epsilon+\epsilon\]

    Completing our proof.
\end{proof}

As a brief aside, maps satisfying the properties $(A),(B),(C)$ are sometimes called \textbf{quasi-actions}. The interested reader can explore this concept further in \cite{Quasi_Actions_MLS}. In fact, the main result of that paper is another proof of the quasi-isometric rigidity of $\Z$, though in greater generality. 

This formulation of the map $*$ in the notation of a quasi-action and the proof idea for Lemma \ref{lem:NoFlip} below are both due to L\'eo Delage. The authors would like to thank him for his contributions to this project.
 
\section*{Step 2: An Action on the Ends} \label{action ends}
We have shown that this map $*$ assigns to each element $g \in G$ a quasi-isometry of $\R$. Moreover, isometries of $\R$ come in only two flavors: reflections and translations. So perhaps the map $*$ associates to each $g$ either a `reflection-like' or `translation-like' map on $\mathbb{R}$. This turns out to be true in the following sense:

\begin{lemma}\label{lem:ActionOnFarIntervals}
    Fix some $g \in G$. Then, for $x > \lambda^2(\lambda^2+\lambda\epsilon+3\epsilon+d_\R(g*0,0)+1)$, either:
    \begin{enumerate}
        \item (`Translation-like') $g*[x,\infty)\subseteq (0,\infty)$ or
        \item (`Reflection-like') $g*[x,\infty) \subseteq (-\infty,0)$
    \end{enumerate}
\end{lemma}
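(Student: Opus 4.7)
The plan is to exploit property (3) of Lemma~\ref{lem:four properties}, which says that the self-map $g*(\cdot)$ is a $(\lambda^2, \lambda\epsilon+\epsilon)$-quasi-isometry of $\R$. Since a quasi-isometry of $\R$ cannot oscillate wildly, once we get far enough out along the positive ray, the image must settle to one side of $0$. Concretely, I want to prove two things: (i) for every $x \ge x_0 := \lambda^2(\lambda^2+\lambda\epsilon+3\epsilon+d_\R(g*0,0)+1)$, the value $|g*x|$ exceeds $\lambda^2+\lambda\epsilon+\epsilon$; and (ii) if $y_1, y_2 \ge x_0$ satisfy $|y_1-y_2|\le 1$, then $g*y_1$ and $g*y_2$ have the same sign. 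Chaining (ii) shows all of $g*[x_0, \infty)$ lies on one side of $0$, which is stronger than the stated conclusion.

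For (i), apply the lower bound in property (3) with $y=0$: for $x>0$,
\[
d_\R(g*x, g*0) \;\ge\; \tfrac{1}{\lambda^2}x - \tfrac{\epsilon}{\lambda} - \epsilon.
\]
Then $|g*x| \ge d_\R(g*x, g*0) - d_\R(g*0, 0)$, and plugging in $x \ge x_0$ I would simplify (using $\lambda \ge 1$, so $\epsilon/\lambda \le \epsilon$) to obtain $|g*x| \ge \lambda^2+\lambda\epsilon+\epsilon+1 > \lambda^2+\lambda\epsilon+\epsilon$. This is the step where the exact choice of the constant $x_0$ in the lemma statement earns its keep; I would highlight the computation to justify each term.

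For (ii), apply the upper bound in property (3): if $|y_1-y_2|\le 1$ then
\[
|g*y_1 - g*y_2| \;\le\; \lambda^2\cdot 1 + \lambda\epsilon + \epsilon \;=\; \lambda^2+\lambda\epsilon+\epsilon.
\]
If $g*y_1$ and $g*y_2$ had opposite signs, then by (i) their difference would exceed $2(\lambda^2+\lambda\epsilon+\epsilon)$, contradicting the above. Finally, given any two points $y, y' \in [x_0, \infty)$, I would connect them by a finite chain $y = t_0 < t_1 < \cdots < t_n = y'$ with $|t_{i+1}-t_i|\le 1$ (all lying in $[x_0, \infty)$); applying the sign-agreement step along this chain shows $g*y$ and $g*y'$ have the same sign, establishing the dichotomy.

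There is no real obstacle; the only mild subtlety is choosing the right threshold so the arithmetic from (i) comes out cleanly and dominates the Lipschitz constant from (ii). Since everything reduces to applying the two sides of property (3) and a triangle inequality, I expect the proof to be short and computational, with the bulk of the work being bookkeeping of the constants exactly as written in the statement.
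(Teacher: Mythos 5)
Your proposal is correct and follows essentially the same route as the paper: both first use the lower bound of Lemma~\ref{lem:four properties}(3) plus the triangle inequality to show $|g*y| \ge \lambda^2+\lambda\epsilon+\epsilon+1$ for all $y \ge x_0$, and then use the upper bound to show the image cannot jump across $0$ over a unit step. The only cosmetic difference is that the paper phrases the second part as an induction propagating positivity along $[x, x+n]$, while you phrase it as sign-agreement of points at distance at most $1$ followed by chaining; the arithmetic and the role of the threshold constant are identical.
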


\begin{proof}
    We will prove this statement by showing that for each $n \in \mathbb{N}$, $g*[x, x+n] \subseteq (0, \infty)$ under the assumption that $g *x > 0$. The logic will be such that a totally analogous argument proves that if $g*x <0$, then $g*[x, x+n] \subseteq (-\infty, 0)$ for each $n \in \mathbb{N}$. For this to yield the desired statement, we need to rule out the third possibility, namely that $g *x = 0$. So fix some arbitrary  $x>\lambda^2(\lambda^2+\lambda\epsilon+\epsilon+d_\R(g*0,0)+1)$. 
    
    We will show that $0<d_\R(g*x,0)$. In fact, we will show the stronger statement that any $y \geq x$, $g*y$ must be far from zero.

    From the triangle inequality, we know that:
    \[d_\R(g*y,g*0)-d_\R(g*0,0)\leq d_\R(g*y,0)\]

    From Lemma \ref{lem:four properties}.3, the left-hand side is bounded below:
    \[\frac{1}{\lambda^2}d_\R(y,0)-\frac{\epsilon}{\lambda}-\epsilon-d_\R(g*0,0)\leq d_\R(g*y,0)\]

    In particular, using the facts that $y\geq x>\lambda^2(\lambda^2+\lambda\epsilon+3\epsilon+d_\R(g*0,0)+1)$ and $\frac{\epsilon}{\lambda}<\epsilon$, we can further bound this new left-hand side from below as well, obtaining:
    \begin{align*}
        \frac{1}{\lambda^2}(\lambda^2(\lambda^2+\lambda\epsilon+3\epsilon+d_\R(g*0,0)+1))-\frac{\epsilon}{\lambda}-\epsilon-d_\R(g*0,0)&\leq d_\R(g*y,0)
    \end{align*}

    Moreover, 

    \[  \lambda^2+\lambda\epsilon+3\epsilon+d_\R(g*0,0)+1-\epsilon-\epsilon-d_\R(g*0,0) =  \lambda^2+\lambda\epsilon+\epsilon+1. \]

    Thus, $\lambda^2+\lambda\epsilon+\epsilon+1 \leq d_\R(g*y,0)$ for all $y\geq x$. We can now conclude that $g*x\neq 0$.
    
    Now, assume that $g*x >0$, and assume further that $g*[x,x+n]\subseteq (0,\infty)$ for some $n \in \mathbb{N}$ (including the possibility that $n=0$). Then, we will show that $g*[x+n,x+n+1] \subseteq (0,\infty)$ as in Figure \ref{fig:Induction}.

    \begin{figure}[h]
    \centering
    \includegraphics[width=0.5\linewidth]{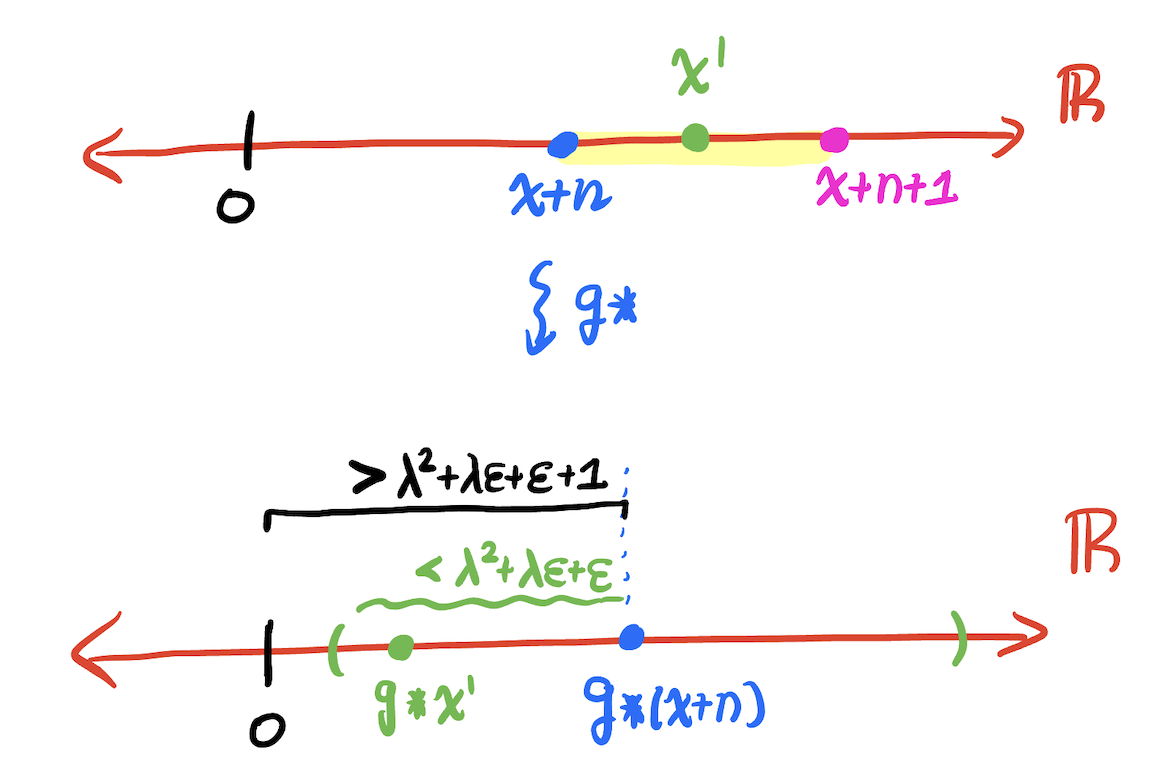}
    \caption{The inductive step of Lemma \ref{lem:ActionOnFarIntervals}}
    \label{fig:Induction}
    \end{figure}

    Since $x+n\geq x$, we know by our general statement above that $\lambda^2+\lambda\epsilon+\epsilon+1<d_\R(g*(x+n),0)$. And since we are assuming that $g*(x+n)>0$, it follows that
    \[0<\lambda^2+\lambda\epsilon+\epsilon+1<g*(x+n).\]

    Let $x' \in [x+n,x+n+1]$ be arbitrary. Then, by Lemma \ref{lem:four properties} we have that:
    \[d_\R(g*(x+n),g*x')\leq \lambda^2 d_\R(x+n,x')+\lambda\epsilon+\epsilon.\]

    Since $x' \in [x+n,x+n+1]$, we have that $d_\R(x+n,x')\leq 1$. Thus: 
    \[d_\R(g*(x+n),g*x')\leq \lambda^2 +\lambda\epsilon+\epsilon.\]

    Therefore, every element in $[x+n,x+n+1]$ must map to within $\lambda^2 +\lambda\epsilon+\epsilon$ of $g*(x+n)$. As we also know that $\lambda^2+\lambda\epsilon+\epsilon+1<g*(x+n)$. This allows us to conclude that every element in $g*[x+n,x+n+1]$ is positive, completing our inductive step.

    As we mentioned up above, a directly analogous argument holds for the case where we assume $g*[x,x+n]\subseteq (-\infty,0)$.

    Therefore, if we choose an $x$ sufficiently large, we have that either $g*[x,\infty)\subseteq (0,\infty)$ or $g*[x,\infty) \subseteq (\infty,0)$.
\end{proof}

Lemmas \ref{lem:four properties} and \ref{lem:ActionOnFarIntervals} can be combined to deduce that, as long as we ignore whatever happens inside of a fixed interval around $0$, the map $x \mapsto g *x$ either preserves each infinite ray $(-\infty, 0)$ and $(0, \infty)$, or permutes them: 

\begin{corollary}
    Given $x$ sufficiently large as in the statement of the previous lemma,
    \begin{enumerate}
        \item if $g*[x,\infty)\subseteq (0,\infty)$, then $g*(-\infty,-x]\subseteq (-\infty,0)$. 
        \item if $g*[x,\infty)\subseteq (-\infty,0)$, then $g*(-\infty,-x]\subseteq (0,\infty)$. 
    \end{enumerate}
\end{corollary}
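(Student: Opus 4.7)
The plan is to combine a symmetric version of Lemma \ref{lem:ActionOnFarIntervals} applied to the negative ray with the crucial fact that $*(g,\cdot)$ has quasi-dense image in $\R$. First I would observe that the proof of Lemma \ref{lem:ActionOnFarIntervals} carries over verbatim to the ray $(-\infty,-x]$: the key lower bound $\frac{1}{\lambda^2}d_\R(y,0)-\frac{\epsilon}{\lambda}-\epsilon-d_\R(g*0,0)\le d_\R(g*y,0)$ depends only on $|y|$ being large, so the same threshold on $x$ forces $g*y \neq 0$ for every $|y|\ge x$, and a symmetric induction marching leftward from $-x$ shows that $g*(-\infty,-x]$ is contained in exactly one of $(0,\infty)$ or $(-\infty,0)$. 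This leaves four combinatorial possibilities for the pair of inclusions, and the corollary reduces to ruling out the two in which $g*[x,\infty)$ and $g*(-\infty,-x]$ land in the \emph{same} half-line.

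To rule these out, I would invoke the fact that $*(g,\cdot):\R\to\R$ is actually a quasi-isometry of $\R$, not merely a quasi-isometric embedding. Indeed, $*(g,\cdot)=\varphi\circ(g\bullet)\circ\varphi^{-1}$ is a composition of two quasi-isometries (each with quasi-dense image) and an isometry of $\Gamma$, so the image $g*\R$ is quasi-dense in $\R$. Fix a constant $K\ge 0$ such that every real number lies within $K$ of a point of $g*\R$.

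Now suppose for contradiction that $g*[x,\infty)\subseteq(0,\infty)$ and $g*(-\infty,-x]\subseteq(0,\infty)$ as well. The remaining middle piece $g*[-x,x]$ is bounded in diameter by roughly $2\lambda^2 x+\lambda\epsilon+\epsilon$ (via Lemma \ref{lem:four properties}.3 applied to $-x$ and $x$), so combining the three pieces forces $g*\R$ to lie in some half-line $[M,\infty)$. But then every point of $(-\infty,M-K-1)$ is farther than $K$ from $g*\R$, contradicting quasi-density. The symmetric case, where both rays land in $(-\infty,0)$, is ruled out identically. The main obstacle I anticipate is the purely technical job of checking that the symmetric extension of Lemma \ref{lem:ActionOnFarIntervals} really works with the same quantitative threshold on $x$; once that is in place, the quasi-density argument is short, clean, and makes essential use of the fact that a quasi-isometry of $\R$ cannot be swallowed by a single ray.
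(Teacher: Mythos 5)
Your proof is correct, and it is worth noting that the paper does not actually supply an argument here: the corollary is asserted as something one ``combines'' from Lemmas \ref{lem:four properties} and \ref{lem:ActionOnFarIntervals}. Your write-up fills in exactly the two things that combination requires. The first half (running the proof of Lemma \ref{lem:ActionOnFarIntervals} verbatim on the negative ray) is routine, since the key estimate only involves $d_\R(y,0)=|y|$ and the induction marches leftward just as well. The second half is the genuinely non-obvious point, and you identify it correctly: the quasi-isometric \emph{embedding} properties of $*(g,\cdot)$ recorded in Lemma \ref{lem:four properties} are not enough to rule out both rays landing in the same half-line. For instance, the map sending $t\mapsto 2t$ for $t>0$ and $t\mapsto -4t$ for $t\le 0$ is a $(4,0)$ quasi-isometric embedding of $\R$ into $\R$ that folds both ends onto $(0,\infty)$. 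So one must use that $*(g,\cdot)=\varphi\circ(g\bullet)\circ\varphi^{-1}$ has quasi-dense image --- $\varphi^{-1}$ and $\varphi$ are quasi-isometries and $g\bullet$ is a surjective isometry of $\Gamma$ --- after which the bounded image of $[-x,x]$ (via the upper bound in Lemma \ref{lem:four properties}.3) shows the whole image would sit in a half-line, contradicting quasi-density. This is a clean and complete argument, and arguably the corollary deserves this paragraph of justification in the text.
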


Thus, we have determined that each element $g \in G$ either acts as a coarse translation, keeping each end\footnote{We use the word `ends' in an informal sense here, though a reader familiar with the definition of the ends of a topological space may note that this statement holds with the formal definition as well. See Appendix A for more on ends.} of the real line fixed; or, a coarse reflection, which flips the ends of the real line.  Let us use this fact to construct a new group action that only looks at this large scale behavior.

For notational ease, for some $g \in G$ let $l_g$ denote the bound $\lambda^2(\lambda^2+\lambda\epsilon+3\epsilon+d_\R(g*0,0)+1)$ given in the lemma above.

Define the map $*:G\times \{\pm \infty\} \to \{\pm \infty\}$ by the following. For each $g \in G$, choose any $x>l_g$. If $g*[x,\infty)\subseteq (0,\infty)$ then $g*\pm \infty=\pm\infty$. Otherwise, $g*\pm \infty=\mp\infty$.

\begin{lemma}
    The map $*:G\times \{\pm \infty\} \to \{\pm \infty\}$ is a well-defined group action.
\end{lemma}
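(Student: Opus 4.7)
The plan is to verify two things: (i) the map is well-defined (it does not depend on the choice of $x > l_g$), and (ii) it satisfies the two group-action axioms $(gh)*\pm\infty = g*(h*\pm\infty)$ and $e*\pm\infty = \pm\infty$.

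For well-definedness, I would observe that if $x, x' > l_g$ with $x \leq x'$, then $[x',\infty) \subseteq [x,\infty)$, so $g*[x',\infty) \subseteq g*[x,\infty)$. Lemma~\ref{lem:ActionOnFarIntervals} forces each of these images to lie in exactly one of $(0,\infty)$ or $(-\infty,0)$, and the inclusion pins them to the same one; the corollary immediately following that lemma then determines the behaviour on $(-\infty,-x]$ consistently, so the assignment of $g*\pm\infty$ is unambiguous.

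For the identity axiom, Lemma~\ref{lem:four properties}(2) gives $d_{\R}(e*x, x) \leq \epsilon$, so for any $x > \max(\epsilon, l_e)$ we have $e*x > 0$, forcing $e*[x,\infty) \subseteq (0,\infty)$. Thus $e*(+\infty) = +\infty$, and by the corollary $e*(-\infty) = -\infty$.

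The main work is the composition axiom, and this is where I expect the only real obstacle: the map $*$ is only approximately associative. Lemma~\ref{lem:four properties}(1) tells us $d_\R((gh)*x,\, g*(h*x)) \leq \epsilon + \lambda\epsilon$, a fixed finite error. To upgrade this coarse associativity to exact equality on the ends, I will combine it with the quantitative lower bound of Lemma~\ref{lem:four properties}(3), which forces $|h*x| \geq \tfrac{1}{\lambda^2}|x| - \tfrac{\epsilon}{\lambda} - \epsilon - |h*0|$, so $|h*x| \to \infty$ as $x \to \infty$. Consequently, for all sufficiently large $x$, the point $h*x$ has magnitude exceeding $l_g$ and has the sign prescribed by $h*(+\infty)$; applying Lemma~\ref{lem:ActionOnFarIntervals} to $g$ then shows $g*(h*x)$ lies on the side prescribed by $g*(h*(+\infty))$, and its magnitude likewise tends to infinity. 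Since $(gh)*x$ sits within the bounded distance $\epsilon + \lambda\epsilon$ of $g*(h*x)$ while the latter escapes to $\pm\infty$, their signs must agree for all sufficiently large $x$, matching the definition of $(gh)*(+\infty)$. The identical argument with the sign conventions reversed handles $-\infty$. The only bookkeeping step is verifying that the various thresholds ``$x$ sufficiently large'' can be satisfied simultaneously, which is automatic since each is a finite constant depending on $g$ and $h$.
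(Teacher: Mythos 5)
Your proposal is correct and follows essentially the same route as the paper: the identity axiom via Lemma \ref{lem:four properties}.2, and the composition axiom by taking $x$ large enough that $h*x$ has magnitude exceeding $l_g$ and the sign dictated by $h*(\pm\infty)$, bounding $|g*(h*x)|$ below so that it exceeds the coarse-associativity error $\epsilon+\lambda\epsilon$, and thereby transferring the sign to $(gh)*x$ (with $x>l_{gh}$ handled in your final bookkeeping remark). The only difference is cosmetic: you spell out well-definedness via nested rays where the paper simply cites Lemma \ref{lem:ActionOnFarIntervals}, and you use a ``magnitude tends to infinity'' bound where the paper extracts the explicit constant $\lambda^2+\lambda\epsilon+\epsilon+1$.
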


\begin{proof}
    Lemma \ref{lem:ActionOnFarIntervals} above demonstrates that this map $*$ is well defined.

    For the identity axiom, consider the map $e*$ on the real line. From Lemma \ref{lem:four properties}.2, we know that $d_\mathbb{R}(e*x,x)\leq \epsilon$ for all $x \in \mathbb{R}$. Since $e*$ can only move elements in $\mathbb{R}$ by up to $\epsilon$, it must map every real number $x>\epsilon$ to a positive number. Therefore, $e*$ does not flip the ends of the real line, and $e*\pm \infty=\pm\infty$, as desired.

    It remains to show that $(gh) *= g*(h*)$.
    
    Consider the case where $h*\infty =\infty$ and $g*\infty=\infty$ (which means that $h*-\infty=-\infty$ and $g*-\infty=-\infty$). All other cases will follow by an analogous argument.
    
    Choose $x$ so that $x>\max(l_h,l_{gh})$ and $d_\R(h*x,0)> l_g$.

    Since $h*x>0$, $d_\R(h*x,0)>l_g$, and $g*\infty=\infty$, it follows by definition of $l_g$ that $g*(h*x)>0$. In fact, from the proof of Lemma \ref{lem:ActionOnFarIntervals} above, we get the stronger statement that $g*(h*x)>\lambda^2+\lambda\epsilon+\epsilon+1$.
    
    From Lemma \ref{lem:four properties}, we know that $d_\R((gh)*x,g*(h*x))\leq \epsilon+\lambda\epsilon$. Therefore, if $g*(h*x)$ is positive, $(gh)*x$ must be positive as well.

    Since $x>l_{gh}$, the fact that $(gh)*x$ is positive implies by definition that $gh*\infty=\infty$ and $gh*-\infty=-\infty$, proving the desired equality of maps.

\end{proof}

This group action gives rise to a homomorphism $f:G \to \text{Isom}_{Set}(\{\pm \infty\})\cong \mathbb{Z}_2$. The kernel $\ker(f)$ is an infinite, index 2 subgroup of $G$ of coarse translations on $\mathbb{R}$. This subgroup will prove to be very useful for us. Recall that we hope to use the geometry of $\R$ to find a copy of $\Z\leq G$. As a non-trivial translation of $\R$ is an infinite order element, it may be easier to find the desired copy of $\Z$ within the subgroup $\ker(f)$.

In fact, we will now show that elements of $\ker(f)$ act like translations in the following sense: any $g \in \ker(f)$ preserves the order of sufficiently far apart elements in $\R$. We begin this proof with a helper lemma.

\begin{lemma}\label{lem:PassClose}
    Given an arbitrary $g \in G$, $g*([a,b])$ passes within distance $l=\lambda^{2}+\lambda\epsilon+\epsilon$ of every point on the interval $[g * a,g* b]$ (or $[g* b,g * a]$ if the order is flipped).
\end{lemma}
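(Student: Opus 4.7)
The plan is to exploit the quasi-isometry property of $g*$ established in Lemma \ref{lem:four properties}(3): two points at distance at most $1$ in $\R$ have images at distance at most $\lambda^{2}\cdot 1 + \lambda\epsilon + \epsilon = l$. This turns the image of the interval into an (a priori non-continuous) chain of points whose consecutive gaps are uniformly small, which is the substitute for continuity that an intermediate value argument requires.

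Concretely, I would subdivide $[a,b]$ into a finite sequence $a = x_0 < x_1 < \cdots < x_n = b$ with $x_{i+1} - x_i \leq 1$ for each $i$. Applying part (3) of Lemma \ref{lem:four properties} to each consecutive pair yields
\[ d_{\R}(g*x_i,\, g*x_{i+1}) \leq \lambda^{2} + \lambda\epsilon + \epsilon = l. \]
So the finite sequence $g*x_0, g*x_1, \ldots, g*x_n$ lies inside $g*([a,b])$, begins at $g*a$, ends at $g*b$, and has consecutive jumps bounded by $l$.

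Now fix any $c$ lying in the interval between $g*a$ and $g*b$ (regardless of which endpoint is larger, covering the ``order flipped'' case). Consider the signs of the real numbers $g*x_i - c$ as $i$ runs from $0$ to $n$. The value at $i=0$ is $g*a - c$ and the value at $i=n$ is $g*b - c$; since $c$ is sandwiched between $g*a$ and $g*b$, these have opposite signs (or one of them vanishes). Hence there is some index $i$ at which the sign changes, so $c$ lies between $g*x_{i-1}$ and $g*x_i$. Then
\[ d_{\R}(g*x_i,\, c) \leq d_{\R}(g*x_i,\, g*x_{i-1}) \leq l, \]
exhibiting a point of $g*([a,b])$ within distance $l$ of $c$, which is exactly what we want.

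There is no real obstacle here: the argument is a discrete intermediate value theorem, and all the hard work was already done in Lemma \ref{lem:four properties}(3). The only things to be careful about are (i) stating the ``chain of gaps $\leq l$'' conclusion with the constant matching the lemma's statement exactly, and (ii) writing the case analysis in a way that handles both $g*a \leq g*b$ and $g*a \geq g*b$ simultaneously, since at this point we have not yet restricted to $\ker(f)$.
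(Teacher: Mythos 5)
Your proof is correct, and it proves the lemma by a slightly different mechanism than the paper, though both arguments ultimately rest on the same input: the upper bound in Lemma \ref{lem:four properties}(3) applied to points of $[a,b]$ at distance at most $1$. You subdivide the domain into a finite $1$-net $a = x_0 < \cdots < x_n = b$, observe that the image chain $g*x_0, \ldots, g*x_n$ has consecutive jumps at most $l$, and run a discrete intermediate value argument on the signs of $g*x_i - c$. The paper instead fixes the target point $x \in [g*a, g*b]$ and partitions $[a,b]$ into the set $M$ of points mapping below $x - l/2$ and the set $N$ of points mapping above $x + l/2$; Lemma \ref{lem:four properties}(3) forces any $m \in M$ and $n \in N$ to be more than distance $1$ apart, and since a closed interval cannot be split into two nonempty pieces separated by a definite gap, some point of $[a,b]$ lands within $l/2$ of $x$. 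Your version is more constructive and arguably more elementary (an explicit finite chain rather than a connectedness-based contradiction), and it handles the orientation of $[g*a, g*b]$ uniformly, as you note; the paper's version is marginally sharper in the generic case (it lands within $l/2$ of the target rather than $l$), though both establish the stated bound. Either argument is a legitimate proof of the lemma as stated.
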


\begin{proof}
    
    Without loss of generality assume $g * a<g * b$. Let $x \in [g * a,g* b]$ be arbitrary.

    In the event that $g * a$ or $g * b$ is within distance $\frac{\lambda^{2}+\lambda\epsilon+\epsilon}{2}$ of $x$, the conclusion immediately holds because $g *a$ is in both $[g*a, g*b]$ and $g*[a,b]$ (and similarly for $g*b$.

    So let us assume that this doesn't happen, namely that $x$ is greater than distance $\frac{\lambda^{2}+\lambda\epsilon+\epsilon}{2}$ from both $g*a$ and $g*b$.

    Let us define two subsets $M,N$ of $[a,b]$ as follows:
    \[ M = \left\{m \in [a,b]: g*m < x-\frac{\lambda^{2}+\lambda\epsilon+\epsilon}{2} \right\},  \]
    and 
    \[ N = \left\{ n \in [a,b]: x+\frac{\lambda^{2}+\lambda\epsilon+\epsilon}{2} < g * n \right\}.\]
    Note that we have chosen $x$ so that neither $M$ nor $N$ can be empty. 

    For an arbitrary element $m \in M$ and $n \in N$, we use the fact that $g*m<x<g*n$ to deduce that
    \[d_\mathbb{R}(g*m,x)+d_\mathbb{R}(x,g*n)=d_\mathbb{R}(g*m,g*n)\]

    Since we know that both $g*m$ and $g*n$ are greater than distance $\frac{\lambda^{2}+\lambda\epsilon+\epsilon}{2}$ from $x$, we have that:
    \[\lambda^{2}+\lambda\epsilon+\epsilon<d_\mathbb{R}(g*m,g*n)\]

    By Lemma \ref{lem:four properties}, we get the upper bound
    \[d_\mathbb{R}(g*m, g*n) \leq \lambda^2d_\mathbb{R}(m,n)+\lambda\epsilon+\epsilon,\]
    and therefore 
    \[ \lambda^{2} + \lambda \epsilon + \epsilon < \lambda^{2} d_{\R}(m,n) + \lambda \epsilon + \epsilon.\]
    A quick algebraic manipulation reveals that
    \[1<d_\mathbb{R}(m,n)\]

    Since there is no way to partition a closed interval into two nonempty sets such that every element of one is distance at least $1$ from every element of the other, there must be a point in neither $M$ nor $N$. In other words, $g*$ must map some point in $[a,b]$ close to (within distance $l$ of) our arbitrary point $x$.
\end{proof}

Now, we can prove the order preserving quality of $g*$. 

\begin{lemma}\label{lem:NoFlip}
    For all real numbers $n>\lambda^2 l+\lambda\epsilon+\lambda^2\epsilon$ and group elements $g \in \ker(f)$, the following inequality holds: $g *x < g*(x+n)$.
\end{lemma}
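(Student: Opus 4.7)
The plan is to argue by contradiction. Suppose $g * (x+n) \leq g * x$; I will produce a point $s \in [x+n, y]$ for some large $y$ whose image $g * s$ returns close to $g * x$, and then invert the quasi-isometry bound to force $s$ itself to be close to $x$, contradicting $s \geq x + n$ once $n$ is large enough.

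The first step is to establish that $g * y \to +\infty$ as $y \to +\infty$. Since $g \in \ker(f)$, the action on ends gives $g*(+\infty) = +\infty$, i.e., $g * y > 0$ for all $y > l_g$. To upgrade eventual positivity to genuine escape, I apply the lower bound of Lemma \ref{lem:four properties}.3 to the pair $(y, 0)$:
\[ d_\R(g*y, g*0) \geq \tfrac{1}{\lambda^2}|y| - \tfrac{\epsilon}{\lambda} - \epsilon, \]
so $|g*y| \to \infty$ as $y \to \infty$, and combined with eventual positivity this yields $g*y \to +\infty$. In particular, I can choose some $y > x + n$ with $g * y > g * x$.

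With the contradiction hypothesis $g * (x+n) \leq g * x$, the point $g * x$ then lies in the interval with endpoints $g*(x+n)$ and $g*y$. Applying Lemma \ref{lem:PassClose} to the interval $[x+n, y]$ produces some $s \in [x+n, y]$ with $d_\R(g * s, g * x) \leq l$. I then invoke the lower bound in Lemma \ref{lem:four properties}.3 on the pair $(s, x)$ and rearrange:
\[ d_\R(s, x) \leq \lambda^2 \, d_\R(g * s, g * x) + \lambda \epsilon + \lambda^2 \epsilon \leq \lambda^2 l + \lambda \epsilon + \lambda^2 \epsilon. \]
By hypothesis this is strictly less than $n$, yet $s \geq x + n$ forces $d_\R(s, x) \geq n$, a contradiction. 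Note that the exact constant $\lambda^2 l + \lambda\epsilon + \lambda^2\epsilon$ appearing in the hypothesis on $n$ falls out precisely from this inversion.

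The most delicate piece is the first step: the definition of $\ker(f)$ only records the \emph{sign} of $g * y$ for $y$ large, whereas my argument needs the quantitative fact that $g * y$ actually escapes past $g * x$, which is what lets me set up Lemma \ref{lem:PassClose} over a long enough interval. Once that escape is secured via the quasi-isometry lower bound, everything else is bookkeeping with Lemmas \ref{lem:four properties} and \ref{lem:PassClose}.
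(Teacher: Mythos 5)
Your proposal is correct and follows essentially the same route as the paper: assume $g*(x+n)\leq g*x$, find $y$ with $g*y>g*x$ using $g\in\ker(f)$, apply Lemma \ref{lem:PassClose} to land some $s\in[x+n,y]$ within $l$ of $g*x$, and invert the lower bound of Lemma \ref{lem:four properties}.3 to contradict $n>\lambda^2 l+\lambda\epsilon+\lambda^2\epsilon$. The only difference is that you spell out why such a $y$ exists (eventual positivity plus the quasi-isometry lower bound forcing escape to $+\infty$), a step the paper simply asserts; this is a welcome clarification rather than a new approach.
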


\begin{proof}
    Assume, for contradiction, that $g*(x+n)\leq g*x$. Since $g \in \ker(f)$, there must be some $y>x+n$ large enough so that $g*(x+n)\leq g*x<g*y$.

    We can then use Lemma \ref{lem:PassClose}, which states that since $g*x$ lies in the interval $[g*(x+n),g*y]$, there must be some $x+n' \in [x+n,y]$ so that $d_\mathbb{R}(g*(x+n'),g*x)<l$.

    By Lemma \ref{lem:four properties}, we get further that:
    \[\frac{1}{\lambda^2}d_{\mathbb{R}}(x+n',x)-\frac{1}{\lambda}\epsilon-\epsilon\leq d_\mathbb{R}(g*(x+n'),g*x)<l\]

    Which we algebraically manipulate to:
    \[n'<\lambda^2l+\lambda\epsilon+\lambda^2\epsilon\]

    But since $x+n' \in [x+n,y]$, we know that $n'>n$, leading to the statement:
    \[n<\lambda^2l+\lambda\epsilon+\lambda^2\epsilon \]
    contradicting our given lower bound on $n$.

    Therefore, $g*x<g*(x+n)$ for $n>\lambda^2 l+\lambda\epsilon+\lambda^2\epsilon$.
\end{proof}

\begin{figure}[h]
    \centering
    \includegraphics[width=0.5\linewidth]{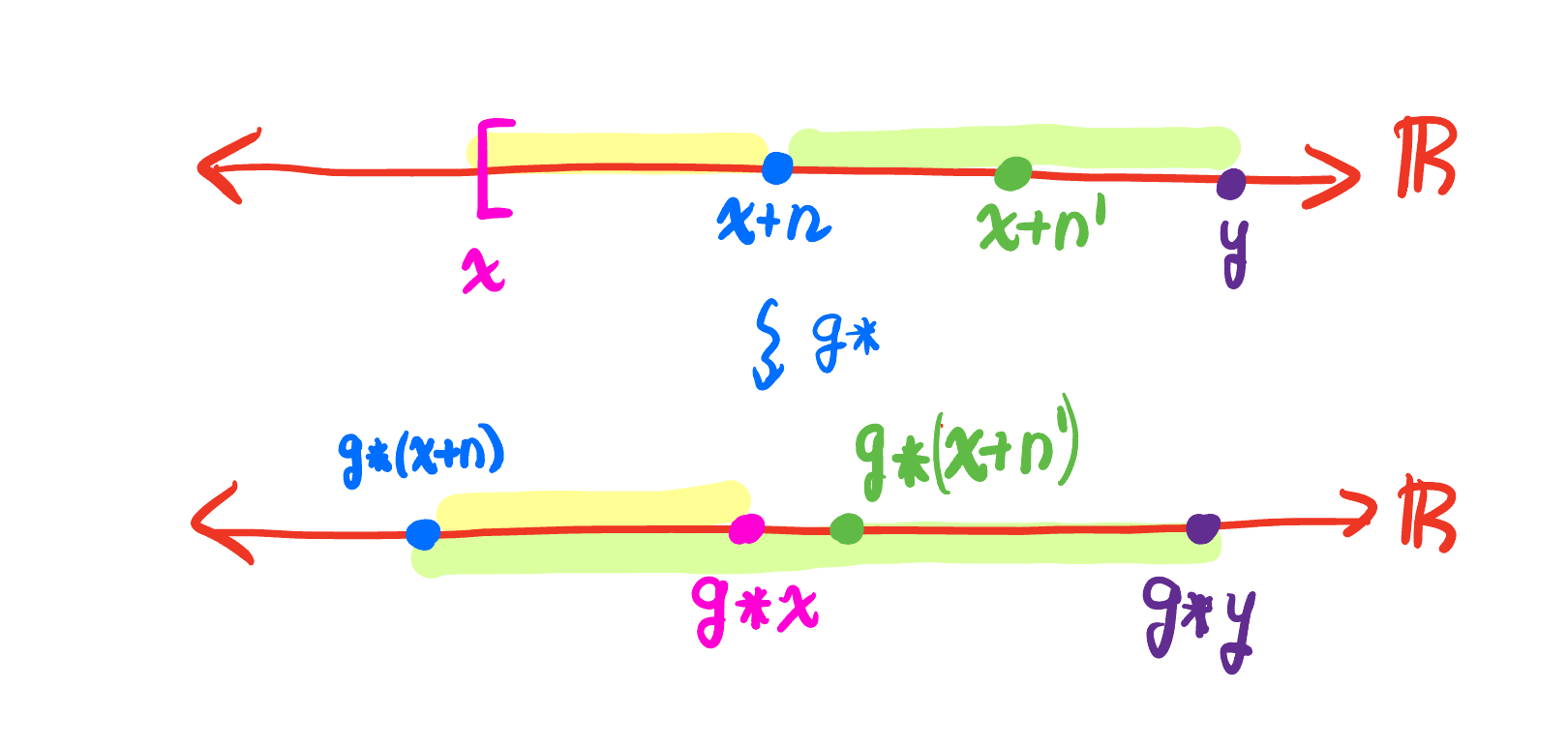}
    \caption{The contradiction in Lemma \ref{lem:NoFlip}. The points $x$ and $x+n'$, which have large distance in the domain, are mapped close to each other in the codomain, contradicting the fact that $g*$ is a quasi-isometry.}
    \label{fig:FoldingLemma}
\end{figure}

\section*{Step 3: Finding a Copy of $\Z$} \label{copy}
At the end of the previous section, we demonstrated that $g*$ preserves the order of elements of $\mathbb{R}$ that are at least distance $n$ apart, with this constant $n$ not depending at all on our choice of $g \in \ker(f)$. Therefore, provided our first application of $g*$ moves zero sufficiently far away, continuing to apply $g*$ will only take us further and further from zero, thus demonstrating that $g$ is an infinite order element. We formalize this in the following lemma.

\begin{lemma}\label{lem:Finding_g}
    There exists an element $g \in G$ so that for all $z \in \mathbb{Z}$:
    \begin{enumerate}[(i)]
        \item The distance $d_\mathbb{R}(g^z*0,g^{z+1}*0)$ is bounded away from $0$ and is uniformly bounded above.
        \item $g^z*0<g^{z+1}*0$
        
    \end{enumerate}
\end{lemma}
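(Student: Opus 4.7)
The plan is to locate $g$ inside the infinite subgroup $\ker(f)$, chosen so that its word length $|g|$ in $\Gamma$ is large enough that $|g * 0|$ comfortably exceeds the threshold $N_{0} := \lambda^{2} l + \lambda \epsilon + \lambda^{2} \epsilon$ from Lemma \ref{lem:NoFlip}. Since $G$ is quasi-isometric to $\R$ it is infinite, and $\ker(f)$ has index two, so $\ker(f)$ is infinite too; we may thus choose $g \in \ker(f)$ with $|g|$ as large as we like. Lemma \ref{lem:four properties}.(4) together with the quasi-isometric lower bound for $\varphi$ forces $|g * 0|$ to grow linearly in $|g|$. If $g * 0 > 0$ we keep $g$; otherwise we replace it by $g^{-1} \in \ker(f)$, which has the same word length. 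That $g^{-1} * 0 > 0$ in the second case follows from Lemma \ref{lem:NoFlip}: if $g^{-1} * 0$ were also negative (with $|g^{-1} * 0| > N_{0}$, forced by the estimate in (i) below), then applying Lemma \ref{lem:NoFlip} to the ordered pair $(g^{-1} * 0, 0)$ would give $g * (g^{-1} * 0) < g * 0$, whereas $g * (g^{-1} * 0)$ must be within $2\epsilon + \lambda\epsilon$ of $0$ by Lemma \ref{lem:four properties}.(1)--(2), contradicting $g * 0 < -N_{0}$.

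For (i), the key observation is that $G$ acts on $\Gamma$ by isometries, so for every $z \in \Z$,
\[
d_{\Gamma}(g^{z} \bullet e,\, g^{z+1} \bullet e) \;=\; d_{\Gamma}(e,\, g \bullet e) \;=\; |g|.
\]
Transporting this to $\R$ via $\varphi$ and using Lemma \ref{lem:four properties}.(4) to replace $\varphi(g^{z} \bullet e)$ by $g^{z} * 0$ up to a bounded error, I obtain
\[
\tfrac{1}{\lambda}|g| - C \;\leq\; d_{\mathbb{R}}(g^{z} * 0,\, g^{z+1} * 0) \;\leq\; \lambda |g| + C
\]
for a constant $C = C(\lambda, \epsilon)$ independent of $z$. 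Choosing $|g|$ large enough that $\tfrac{1}{\lambda}|g| - C$ exceeds both $N_{0}$ and $2(\epsilon + \lambda \epsilon)$ makes the lower bound strictly positive, which is (i).

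For (ii) I induct on $z$, starting at $z = 0$. The base case $g * 0 > 0$ holds by construction. For the forward inductive step, assume $g^{z} * 0 < g^{z+1} * 0$; by (i) the gap is at least $\tfrac{1}{\lambda}|g| - C > N_{0}$, so Lemma \ref{lem:NoFlip} applied to $g \in \ker(f)$ gives $g * (g^{z} * 0) < g * (g^{z+1} * 0)$. Lemma \ref{lem:four properties}.(1) places these quantities within $\epsilon + \lambda \epsilon$ of $g^{z+1} * 0$ and $g^{z+2} * 0$ respectively, and since the honest gap on the right exceeds $2(\epsilon + \lambda \epsilon)$, the strict inequality survives to $g^{z+1} * 0 < g^{z+2} * 0$. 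The backward step ($z < 0$) is symmetric, using $g^{-1} \in \ker(f)$ along with the base case $g^{-1} * 0 < 0 = g^{0} * 0$, itself proved by the same contradiction argument sketched in the first paragraph (now with $g * 0 > 0$ and the hypothetical $g^{-1} * 0 > 0$).

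The main obstacle is purely quantitative bookkeeping: one must choose a single $|g|$ for which the honest linear lower bound $\tfrac{1}{\lambda}|g| - C$ on orbit gaps simultaneously beats Lemma \ref{lem:NoFlip}'s threshold $N_{0}$ and the additive quasi-action error $2(\epsilon + \lambda \epsilon)$. With that one choice, each inductive step preserves both the strict inequality and the large-gap hypothesis needed for the next step.
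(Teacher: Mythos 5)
Your proof is correct and follows essentially the same strategy as the paper: choose $g \in \ker(f)$ of large word length with $g*0 > 0$, derive uniform upper and lower bounds on the orbit gaps, and combine Lemma \ref{lem:NoFlip} with the quasi-action error from Lemma \ref{lem:four properties} to propagate the order along the orbit. The only differences are cosmetic --- for (ii) the paper applies Lemma \ref{lem:NoFlip} once to the element $g^z \in \ker(f)$ for each $z$ rather than inducting with the fixed element $g$, and your explicit verification that $g$ may be chosen with $g*0>0$ (by passing to $g^{-1}$ if necessary) spells out a point the paper leaves implicit.
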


$(i)$ ensures that subsequent elements of $\{g^z*0\}_{z\in \Z}$ are fairly uniformly spaced and never accumulate. $(ii)$ shows that applying $g*$ genuinely translates $0 \in \R$ in the positive direction, which will be used later to prove that $g$ is infinite order. These two properties are precisely what we need for $\{g^z*0\}_{z\in \Z}$ to look like the integers sitting in $\R$.

\begin{figure}[h]
    \centering
    \includegraphics[width=0.5\linewidth]{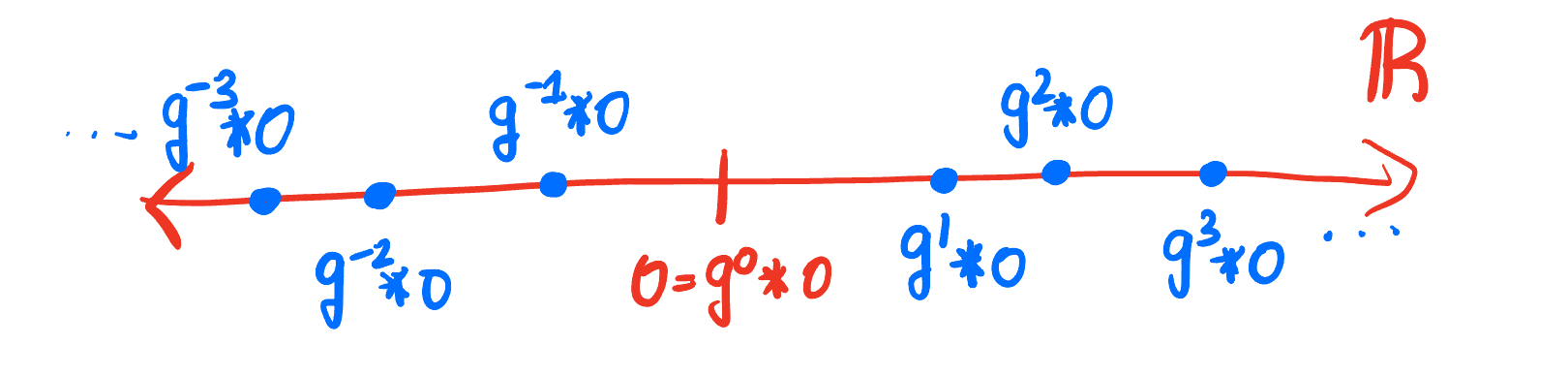}
    \caption{The orbit $\{g^z*0\}_{z \in \Z}$ looks almost like an isometric copy of $\Z$ in $\R$, but with some distortion.}
    \label{fig:QuasiDenseOrbitInR}
\end{figure}

\begin{proof}
    Since $\ker(f)$ is an infinite subgroup of a finitely generated infinite group, we can always choose a $g \in \ker(f)$ which makes $d_\Gamma(e,g\bullet e)$ as large as we like. 

    In particular, let us choose $g$ so that $0<g*0$ and so that:
    \[d_\Gamma(e,g\bullet e)>  \lambda\left(\epsilon+\lambda^2\left(\frac{\epsilon}{\lambda}+\lambda\epsilon+2\epsilon+n\right)\right)\] with $n$ as in Lemma \ref{lem:NoFlip}.

    Now, let us prove that (i) holds for this $g$.

    First, let us find a lower bound. For $z \in \Z$ arbitrary, we have the following from the triangle inequality, repeated uses of Lemma \ref{lem:four properties}, and our bound on $d_\Gamma(e,g\bullet e)$ above (we use red to highlight what changes from line to line):

    \begin{align}
        d_\mathbb{R}(g^z*0,g^{z+1}*0)&\geq d_\mathbb{R}(g^z*0,g^{z}*(g*0)) -d_\mathbb{R}(g^z*(g*0),g^{z+1}*0)\\
        &\geq d_\mathbb{R}(g^z*0,g^{z}*(g*0))\color{red}-\epsilon-\lambda\epsilon\\
        &\geq \color{red}\frac{1}{\lambda^2}d_\mathbb{R}(0, g*0)-\frac{\epsilon}{\lambda}-\epsilon\color{black}-\epsilon-\lambda\epsilon\\
        &= \frac{1}{\lambda^2}d_\mathbb{R}(\color{red}\varphi(e), \varphi(g\bullet \varphi^{-1}(0))\color{black})-\frac{\epsilon}{\lambda}-\epsilon-\epsilon-\lambda\epsilon\\
        &= \frac{1}{\lambda^2}d_\mathbb{R}(\varphi(e), \varphi(g\bullet \color{red}e\color{black}))-\frac{\epsilon}{\lambda}-\epsilon-\epsilon-\lambda\epsilon\\
        &\geq \frac{1}{\lambda^2} \left(\color{red}\frac{1}{\lambda}d_{\Gamma}(e, g\bullet e)-\epsilon\color{black}\right)-\frac{\epsilon}{\lambda}-\epsilon-\epsilon-\lambda\epsilon\\
        &> n
    \end{align}

    An analogous argument gives a uniform upper bound on $d_\mathbb{R}(g^z*0,g^{z+1}*0)$ as well, demonstrating that $(i)$ holds.

    Now, let us demonstrate that this same $g$ satisfies condition $(ii)$. Again, let $z \in \Z$ be arbitrary.
    
    Revisiting our computation above, lines $(3)$ and $(7)$ state in particular that that $d_\mathbb{R}(0,g*0)> n$. 
    
    Thus, $0$ and $g*0$ are sufficiently far apart in $\mathbb{R}$ for Lemma \ref{lem:NoFlip} to apply, which gives us that $g^z*0< g^z*(g*0)$. 

    We must now ensure that passing from $g^z*(g*0)$ to $g^{z+1}*0$ does not flip the inequality. We have shown that the distance between $g^z*0$ and $g^{z+1}*0$ is always greater than $n$, and from Lemma \ref{lem:four properties}.1, we know that the distance between $g^z*(g*0)$ and $g^{z+1}*0$ is at most $\epsilon+\lambda\epsilon$. By the definition of $n$, we know that $n>\epsilon+\lambda\epsilon$. Therefore, the inequality does not flip, and we can conclude that $g^z*0<g^{z+1}*0$. Property $(ii)$ holds.
\end{proof}

As a corollary, we have that $g$ is an infinite order element, as there is no way to compose nontrivial positive translations of $\R$ and get back to the identity. We prove this below:

\begin{corollary}\label{cor:InfiniteOrder}
    The element $g$ is infinite order.
\end{corollary}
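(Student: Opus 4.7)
The plan is to derive a contradiction from the assumption that $g$ has finite order, using only property (ii) of Lemma \ref{lem:Finding_g}. The key observation is that the value $g^z * 0$ depends only on the group element $g^z$ (not on any choice of representative word): if $g^k = e$ as elements of $G$, then literally $g^k * 0 = e * 0$ as real numbers.

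First I would assume for contradiction that $g$ has finite order $k \geq 1$, so $g^k = e$. Then I would chain property (ii) of Lemma \ref{lem:Finding_g}, which asserts $g^z * 0 < g^{z+1} * 0$ for every $z \in \mathbb{Z}$, from $z = 0$ up through $z = k-1$, to obtain the strict inequality $g^0 * 0 < g^k * 0$. Since $g^0 = e = g^k$, both sides are the same real number $e * 0$, giving $e * 0 < e * 0$, which is absurd. Hence $g$ has infinite order.

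There is no real obstacle; the corollary is essentially immediate from the strict monotonicity built into (ii). If one preferred a more quantitative argument, property (i) of Lemma \ref{lem:Finding_g} shows that consecutive terms of the orbit are separated by at least some fixed $c > 0$, so iterating (ii) gives $g^k * 0 \geq g^0 * 0 + kc$; combined with $g^k * 0 = e * 0$ and the $\epsilon$-closeness of $e*0$ to $0$ from Lemma \ref{lem:four properties}(2), one obtains $kc \leq 2\epsilon$, contradicting $k \geq 1$ once $k$ is large enough (and in particular ruling out any finite order). Either formulation works; the cleaner route is simply to note that (ii) produces infinitely many distinct orbit points, which is incompatible with a finite-order element.
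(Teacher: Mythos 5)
Your proposal is correct, and your primary argument is a slight simplification of the paper's rather than a copy of it. The paper argues quantitatively: combining parts (i) and (ii) of Lemma \ref{lem:Finding_g} it gets $|z|n\leq d_\R(0,g^z*0)$, while $g^z=e$ would force $d_\R(0,g^z*0)\leq\epsilon$ by Lemma \ref{lem:four properties}(2), contradicting $n>\epsilon$. Your main route needs only the strict monotonicity (ii) together with the observation --- which you are right to make explicit, and which the paper's notation does support, since $*$ is by definition a function on $G\times\R$ --- that $g^k=e$ literally forces $g^k*0=g^0*0$, contradicting the chained inequality $g^0*0<g^k*0$. This buys you independence from (i), from Lemma \ref{lem:four properties}(2), and from the particular size of $n$ relative to $\epsilon$. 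Your quantitative fallback is essentially the paper's proof; the only phrase to tighten there is ``once $k$ is large enough,'' since one must rule out \emph{every} candidate order $k\geq 1$ --- which does hold here, either because $n>2\epsilon$ already kills $k=1$, or because $g^k=e$ implies $g^{mk}=e$ for all $m$.
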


\begin{proof}
    Since $g^z*0<g^{z+1}*0$, and each step $d_\mathbb{R}(g^z*0,g^{z+1}*0)$ is bounded below by $n$, we know that $|z|n\leq d_\R(0,g^z*0)$.

    If $g^z=e$ for any $z>0$, then this would imply by Lemma \ref{lem:four properties}.2 that $d_\R(0,g^z*0)<\epsilon$. Thus, we would have that for some $z>0$ a positive integer, $zn<\epsilon$. This cannot occur since $n>\epsilon$.

    Thus, $g$ must be infinite order.
\end{proof}

This solves our concern at the beginning of this argument. By using the quasi-isometry to $\R$, it is possible to show that this group $G$ (which at first we knew none of the group theoretic structure of) must contain a copy of $\Z$, namely $\langle g \rangle$. It remains to show that this copy of $\Z$ is finite index in $G$, which we will do by finding that $\{g^z\bullet e\}_{z \in \Z}$ is quasi-dense in the Cayley graph $\Gamma$.

First, we have a as a corollary to Lemma \ref{lem:Finding_g} that:

\begin{corollary}
    Given $g$ as given in Lemma \ref{lem:Finding_g}. The set $\{g^z*0\}_{z \in \Z}$ is quasi-dense in $\mathbb{R}$.
\end{corollary}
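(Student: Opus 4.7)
The plan is to deduce quasi-density directly from the two properties listed in Lemma \ref{lem:Finding_g}. Let $n > 0$ be the uniform lower bound on $d_\R(g^z*0,g^{z+1}*0)$ from Lemma \ref{lem:Finding_g}(i), and let $M$ denote the corresponding uniform upper bound. The key observation is that, by property (ii), the sequence $\{g^z*0\}_{z\in\Z}$ is strictly increasing, and combined with the lower bound from (i), we have $g^{z+1}*0 - g^z*0 \geq n$ for every $z$.

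First I would argue the sequence is unbounded in both directions. A telescoping sum gives $g^z*0 - g^0*0 \geq zn$ for all $z \geq 1$, and $g^0*0 - g^z*0 \geq |z|n$ for all $z \leq -1$. In particular $g^z*0 \to +\infty$ as $z\to +\infty$ and $g^z*0 \to -\infty$ as $z\to -\infty$. Together with monotonicity, this means the intervals $[g^z*0, g^{z+1}*0]$ partition $\R$ (up to endpoints) as $z$ ranges over $\Z$.

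Next, given an arbitrary $x \in \R$, I would pick the unique $z \in \Z$ for which $x \in [g^z*0, g^{z+1}*0]$, which exists by the previous paragraph. The upper bound in (i) then forces
\[
\min\bigl(|x - g^z*0|,\, |x - g^{z+1}*0|\bigr) \leq \tfrac{1}{2}\,d_\R(g^z*0, g^{z+1}*0) \leq \tfrac{M}{2}.
\]
Hence every point of $\R$ lies within distance $M/2$ of a point of $\{g^z*0\}_{z\in\Z}$, which is exactly the definition of quasi-density with constant $k = M/2$.

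There is no real obstacle here; the content of the statement is packaged entirely inside Lemma \ref{lem:Finding_g}, and this corollary is the clean geometric consequence of having a strictly monotone bi-infinite sequence in $\R$ whose consecutive gaps are bounded above and below by positive constants. The only thing to be careful about is ensuring the sequence actually exhausts all of $\R$ in the sense above, which is what the unboundedness argument from telescoping provides.
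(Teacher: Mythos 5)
Your proposal is correct and follows essentially the same route as the paper: use monotonicity plus the lower bound on gaps to see the sequence is unbounded in both directions, locate an arbitrary $x$ in some interval $[g^z*0, g^{z+1}*0]$, and use the uniform upper bound on gaps to conclude. The only cosmetic difference is that you take the nearer endpoint to get the slightly sharper constant $M/2$, where the paper simply uses the full upper bound.
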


\begin{proof}
    From Lemma \ref{lem:Finding_g} above, we know that $\{g^z*0\}_{z \in \Z}$ is a strictly ascending bi-infinte sequence in $\mathbb{R}$. Furthermore, since subsequent elements are least distance $n$ apart with $n>0$, we know that $d_\mathbb{R}(0,g^z*0)$ is eventually greater than any real number for some large enough $z$.

    Therefore, for an arbitrary $x \in \mathbb{R}$, we have that $g^{N}*0\leq x <g^{N+1}*0$ for some $N \in \Z$. Again by Lemma \ref{lem:Finding_g}, we know that $d(g^{N}*0,g^{N+1}*0)$ is uniformly bounded above by a constant $n'$. Therefore, $d_\mathbb{R}(g^N*0,x)<n'$.

    Thus, every $x \in \mathbb{R}$ is within distance $n'$ of some element in $\{g^z*0\}_{z \in \Z}$, which proves the quasi-density of our subset.
\end{proof}

As quasi-density of a subset is a property that is preserved by quasi-isometry, we can apply $\varphi^{-1}$ and further have that:
\begin{corollary}\label{cor:QuasiDense}
    The set $\{\varphi^{-1}(g^z*0)\}_{z \in \Z}$ is quasi-dense in $\Gamma$.
\end{corollary}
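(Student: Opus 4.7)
The plan is to directly exploit the fact that quasi-density of a subset is a quasi-isometry invariant, using $\varphi^{-1}\colon \R \to \Gamma$ to transport the already established quasi-density of $\{g^z*0\}_{z\in\Z}\subseteq \R$ over to $\Gamma$. Schematically: given a target vertex $\gamma \in \Gamma$, I would push it into $\R$ via $\varphi$, approximate $\varphi(\gamma)$ by some $g^z*0$ (using the previous corollary), and then pull the approximation back through $\varphi^{-1}$, controlling the error via the $(\lambda,\epsilon)$ bounds.

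In more detail, fix an arbitrary $\gamma \in \Gamma$. The previous corollary provides $z \in \Z$ and a uniform constant $n'$ with $d_\R(\varphi(\gamma), g^z * 0) \leq n'$. Since $\varphi^{-1}$ is a $(\lambda,\epsilon)$-quasi-isometry, applying the upper bound gives
\[ d_\Gamma\bigl(\varphi^{-1}(\varphi(\gamma)),\,\varphi^{-1}(g^z*0)\bigr) \leq \lambda n' + \epsilon. \]
Combining this with the quasi-inverse property $d_\Gamma(\gamma, \varphi^{-1}(\varphi(\gamma))) \leq \epsilon$ from Notation (C) and the triangle inequality yields
\[ d_\Gamma\bigl(\gamma,\, \varphi^{-1}(g^z*0)\bigr) \leq \lambda n' + 2\epsilon, \]
a bound independent of the choice of $\gamma$. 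This is exactly the definition of quasi-density of $\{\varphi^{-1}(g^z*0)\}_{z\in\Z}$ in $\Gamma$, with quasi-density constant $k = \lambda n' + 2\epsilon$.

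There is no serious obstacle here; this corollary is the $\R \to \Gamma$ mirror of the previous one, and the work is really just bookkeeping the constants coming from $\varphi^{-1}$ and the quasi-inverse estimate. The only thing worth emphasizing, for a reader new to the subject, is the general principle being used: a quasi-isometry carries a quasi-dense subset to a quasi-dense subset, and this is the crucial step that lets us promote the $\R$-side conclusion into a genuine statement about the Cayley graph $\Gamma$, setting up the final observation that $\langle g \rangle$ is a finite-index $\Z$ inside $G$.
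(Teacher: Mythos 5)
Your argument is correct and is exactly the route the paper takes: the paper simply invokes the principle that quasi-density is preserved under quasi-isometry and applies $\varphi^{-1}$, while you have written out the constant-chasing (the $(\lambda,\epsilon)$ bound on $\varphi^{-1}$ plus the quasi-inverse estimate $d_\Gamma(\gamma,\varphi^{-1}\varphi(\gamma))\leq\epsilon$) that justifies it. Nothing is missing.
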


We have shown that every element of $\Gamma$ is within bounded distance of some element of $\varphi^{-1}(g^z *0)$. From Lemma \ref{lem:four properties}.4, we know that if we take an element in $\{\varphi^{-1}(g^z*0)\}_{z \in \Z}$ and its corresponding element in $\{g^z\bullet 0\}_{z \in \Z}$, they differ by at most distance $\lambda$. Thus, if $\{\varphi^{-1}(g^z*0)\}_{z \in \Z}$ is quasi-dense in $\Gamma$, then $\{g^z\bullet 0\}_{z \in \Z}$ must be as well.

Our original theorem statement now follows:

\begin{figure}[b]
    \centering
    \includegraphics[width=0.5\linewidth]{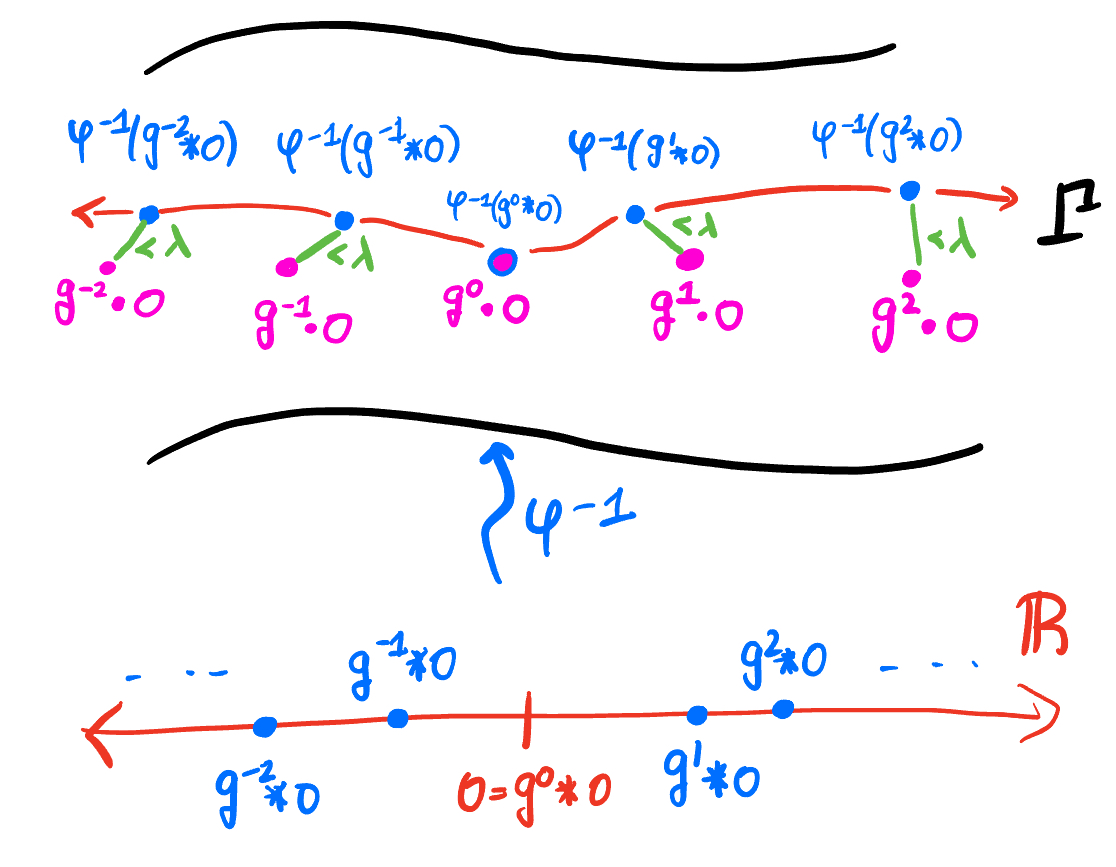}
    \caption{The proof of the main theorem. We find a copy of $\Z$ in $G$ by finding a quasi-dense `copy' of $\Z$ in $\R$ and then sending it back into $\Gamma$ using the quasi-isometry.}
    \label{fig:VirtuallyZ}
\end{figure}

\begin{proof}[Proof of Theorem \ref{thm:main}]   
    Choosing a $g \in \ker(f)$ sufficiently large as in Lemma \ref{lem:Finding_g}, we have from Corollary \ref{cor:QuasiDense} and Lemma \ref{lem:four properties}.4 that the set $\{g^z\bullet 0\}_{z \in \Z}$ is quasi-dense in $\Gamma$. 
    
    Now, let us show that due to quasi-density, we have a \textit{finite-index} subgroup of $G$. Let $G/\langle g \rangle$ denote the set of right cosets of $\langle g \rangle$. Quasi-density of $\langle g \rangle$ says that there is some $k \ge 0$ so that every element of $\Gamma$ lies within distance $k$ of some element of $\langle g \rangle$.  

    The ball $B(k,e)$ of radius $k$ about $e \in \Gamma$ contains at most $|S|^{k}$ vertices, with $|S|$ the cardinality of the generating set. We need to show that every coset of $G/\langle g \rangle $ has a representative in this finite set of vertices, with no two distinct cosets sharing a vertex.
    
    Fix some arbitrary coset $\langle g \rangle g' \in G/\langle g \rangle$. All cosets are images of one another under left multiplication by various group elements, which are isometries of $\Gamma$. Thus, every coset is $k$-quasi dense, and there must be an element in $B(k,e) \cap \langle g \rangle g'$. Since cosets are disjoint, it follows that this element in $B(k,e)$ must be distinct for different cosets. Therefore, there are at most $|S|^{K}$ cosets, as desired. 
    
    This implies that the subgroup $\langle g \rangle \cong \Z$ (by Corollary \ref{cor:InfiniteOrder}) is finite index in $G$, and thus that $G$ is virtually $\Z$.
\end{proof}

\section*{Appendix A: Alternative Proof Methods}\label{sec:AppA}
This classical result is sometimes described as the two-ended case of Stallings' Ends Theorem, which gives a general correspondence between the number of ends of a finitely generated group and its group structure. In the above proof, we seek to construct a concrete and accessible argument that relies only upon the definitions of a group action and a quasi-isometry.

Intuitively speaking, the \emph{ends} of a finitely generated groups are distinct ways you can go towards infinity in the group. A bit more concretely, the number of ends of a finitely generated group $G$ is the least upper bound (possibly infinity) of the number of unbounded connected components of the space obtained by removing a sequence of concentric closed, finite balls around the origin in a Cayley graph of $G$. Thus all finite groups have 0 ends, $\mathbb Z$ has two ends, and $\mathbb Z\oplus \mathbb Z$ has one end. A finitely generated non-abelian free group has infinitely many ends because as we remove larger and larger finite balls around the identity, we get more and more unbounded branches in the complement. 

It is a famous theorem of Hopf \cite{MR10267}, that any finitely generated group $G$ has 0, 1, 2, or infinitely many ends. Stallings later proved that a group with infinitely many ends can be split into pieces in a particular way. \cite{MR228573}. We use one of Flame's original pictures in Figure \ref{fig:Stallings} to indicate the general proof of the Hopf theorem in the case where more then two implies infinitely many ends. 

\begin{figure}[h]
    \centering
    \includegraphics[width=0.5\linewidth]{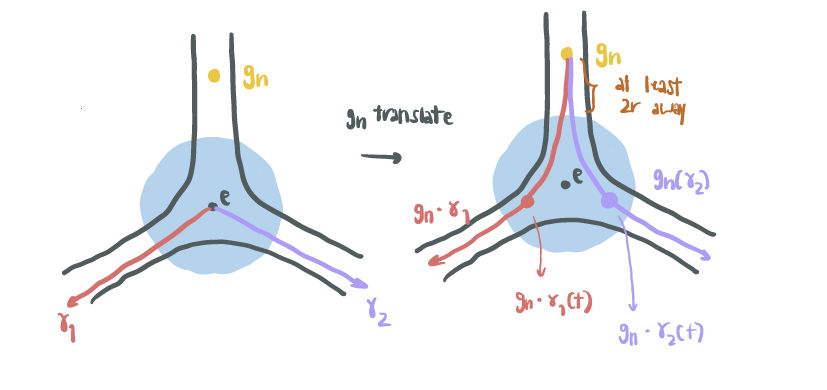}
    \caption{Flame's Sketch of the Greater than 2 Ends Portion of Hopf's Theorem in \cite{bridson_metric_1999}}
    \label{fig:Stallings}
\end{figure}

For readers with experience in metric geometry, an alternative proof method is outlined in Section I.8 of \cite{bridson_metric_1999}. This proof relies on the following facts:
\begin{enumerate}
    \item The number of ends of a space is a quasi-isometry invariant, so the graph $\Gamma$ must be two-ended.
    \item A group of isometries of a space induces a group action on the ends of a space.
    \item In a proper geodesic metric space, the ends of a space can be defined as an equivalence class of proper geodesic rays issuing from \textit{any} point in the space.
\end{enumerate}

\begin{proof}[Proof Sketch]
    For this proof, one can remove a ball of radius $k$ in $\Gamma$ about the identity that separates the graph into exactly two unbounded components that determine the two ends of $\Gamma$, and fix the finite index subgroup $H \leq G$ that acts as the identity on the set of two ends. Choose a point $g\bullet e$ with $g \in H$ which lies one of these two components, which we will call $E$. Without loss of generality, choose $g$ so that it has word length $k$. We can always increase the radius of the separating ball so that such an element can be chosen.

    Then, let us assume for contradiction that $g$ is finite order $n>1$. Since each application of $g\bullet$ moves a point exactly distance $k$, the orbit $\{g^k\bullet e\}_{1\leq k\leq n-1}$ must lie entirely in $E$.
    
    As in Figure \ref{fig:BHProof}, choose a geodesic ray $r$ based at $g^{n-1} \bullet e$ going towards the end that this point is contained in. Note that by our construction of $g$, the basepoint $g^{n-1}$ must lie at distance $k$ from the identity. Then, $g\bullet r$ will be a geodesic ray based at the identity heading towards the same end as $r$ (this is since $g \in H$). Thus, there must be a point $g\bullet r(x)$ with $x>k$ that passes within distance $k$ of $g^{n-1}\bullet e$. Applying the isometry $g\bullet$ to the pair of points $g\bullet r(x)$ and $g^{n-1}\bullet e$ will force them to be at least distance $k$ apart, giving us our contradiction.

Then, by using the quasi-isometry to $\R$, it is possible to show that $\{g^z\bullet e\}_{z \in \Z}$ is quasi-dense in $\Gamma$, completing this proof.
\end{proof}

\begin{figure}[h]
    \centering
    \includegraphics[width=0.45\linewidth]{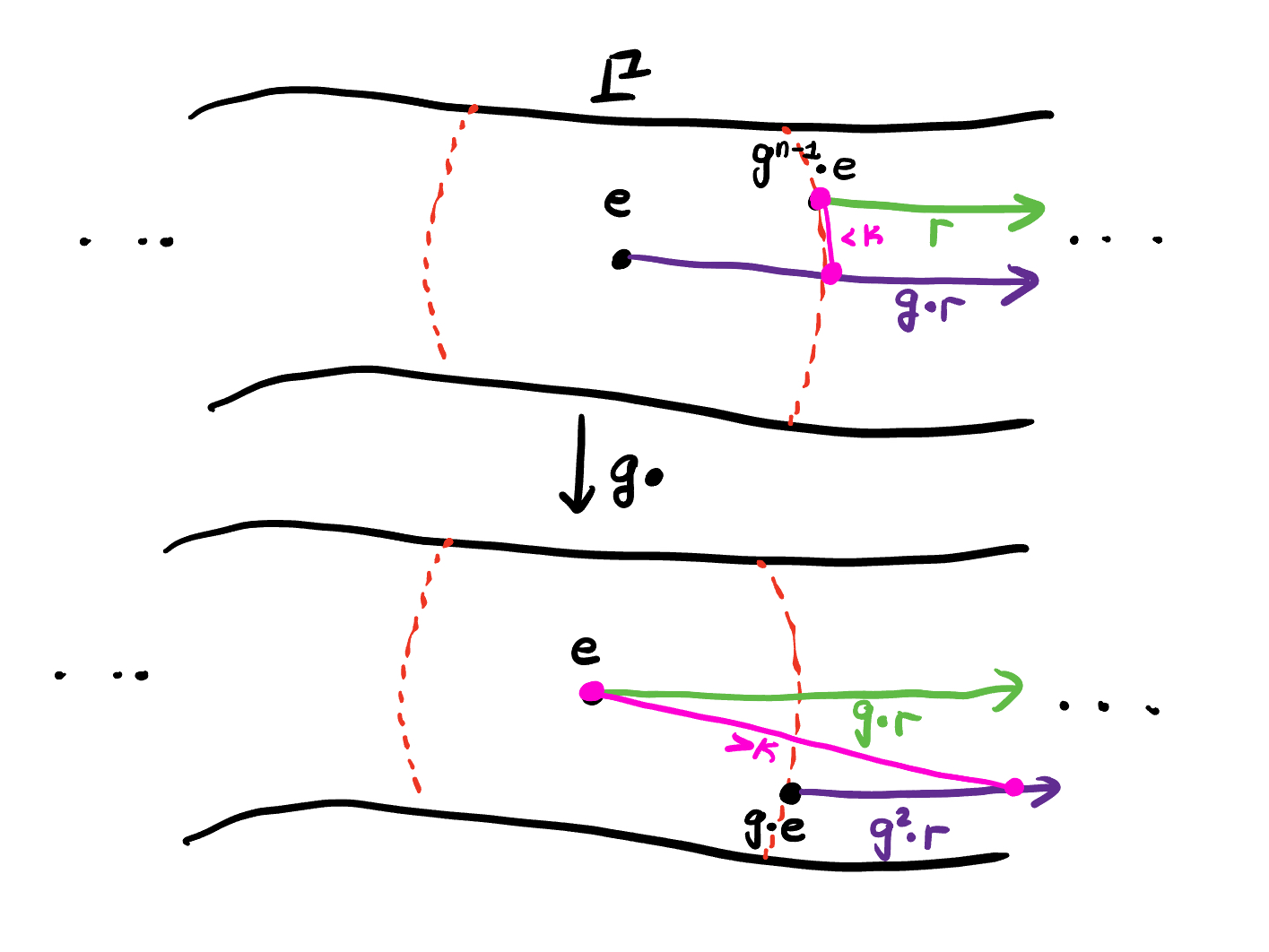}
    \caption{Points on the two rays are forced far apart by the isometry $g\bullet$}
    \label{fig:BHProof}
\end{figure}

If we change the assumption of `$\Gamma$ is quasi-isometric to $\R$' to `$\Gamma$ has two ends', then a proof can be found in Section 11.6 of \cite{meier_groups}. This proof uses the symmetric difference operation:
\[A \triangle B = (A \cup B)\setminus(A\cap B).\] 

We present a brief outline below, omitting technical details.

\begin{proof}[Proof Sketch]
    As above, choose a ball that separates $\Gamma$ into exactly  two unbounded components, and let $E$ be one of the two components. 

    Then, it is shown that the set $H=\{g \in G \;|\; E \triangle gE \text{ is finite}\}$ is a subgroup of index at most two. This corresponds to the subgroup that fixes the ends of the space.

    One can then construct the map:
    \begin{align*}
        \phi: H &\to \Z\\
        g &\mapsto |E \cap gE^c|-|E^c\cap gE|
    \end{align*}

    Where $|\cdot|$ is the number of vertices in the corresponding set. Using the fact that $E \triangle gE$ is finite for each $g \in H$, one can show that $|E \cap gE^c|$ and $|E^c\cap gE|$ are both finite, and thus this map is well defined.

    By partitioning and recombining the sets $E \cap gE^c$ and $E^c\cap gE$, one can show that this map $\phi$ gives a genuine group homomorphism onto $\Z$ with finite kernel. These arguments amount to showing that two sets have the same cardinality, which can be shown using purely set theoretic methods.

    Then, an application of the first isomorphism theorem and Lagrange's theorem proves that $G$ is virtually $\Z$.
\end{proof}

We conclude with a brief note on our proof. Many of our claims about the map $g*$ in Steps 1 and 2 can be shown to be special cases of more general facts about $\delta$-hyperbolicity (see Section III.1 \cite{bridson_metric_1999}), which is a property preserved under quasi-isometry. In particular, $\R$ is a $\delta$-hyperbolic space.

We have chosen to not invoke $\delta$-hyperbolicity, but the consequences of this central concept in geometric group theory are quite powerful. An interested reader can construct shorter alternate proofs of several of our lemmas using the facts that:
\begin{enumerate}
    \item The image of $\R$ under a quasi-isometry is called a quasi-geodesic.
    \item Quasi-isometries of a space send quasi-geodesics to quasi-geodesics.
    \item Quasi-geodesics in $\delta$-hyperbolic spaces stay `close' to geodesics (Theorem III.1.7 \cite{bridson_metric_1999}). 
\end{enumerate}

\section*{Appendix B: Extending to Groups of Linear Growth}
In a MathOverflow post \cite{MathOverflowPost}, Thurston describes a sketch for how one might prove that $\Gamma$ has a finite index subgroup isomorphic to $\mathbb{Z}$ under the weaker assumption that $\Gamma$ has \textit{linear growth}, which means that the number of elements in a ball of radius $k$ around the identity $e$ grows at most linearly in $k$:
\[ \limsup_{k \rightarrow \infty} \frac{|B(k,e)|}{k} < \infty.\]
This is indeed weaker than our assumption, since the existence of a quasi-isometry to $\mathbb{R}$ implies linear growth. Of course, since we claim it is possible to prove that linear growth implies that $\Gamma$ is virtually $\mathbb{Z}$, it follows a posteriori that in our context, linear growth implies a quasi-isometry to $\mathbb{R}$ but this is by no means obvious. 

We outline the contours of Thurston's sketch here: 

Let $G= (V,E)$ be a directed graph for which each edge $e$ is given a weight equal either to $0$ or $1$. Then a $\left\{0,1\right\}$-\textbf{flow} from vertex $v$ to vertex $u$ is a function $f: E \rightarrow \left\{0,1 \right\}$ so that 
\begin{enumerate}
\item For each edge $e$, $f(e) \leq w(e)$ (no edge can support a flow that is larger than its weight); 
\item For all $v' \notin \left\{v,u \right\}$, one has 
\[  \sum_{w: (w,v') \in E} f(w,v') - \sum_{w: (v', w) \in E} f(v',w) = 0.\]
(The flow into $v'$ exactly balances the flow out of $v'$, such that the net flow is $0$.)
\end{enumerate}

The \textbf{magnitude} of a flow, denoted $|f|$, is simply the sum of its values on all edges emanating from $v$:
\[ |f| = \sum_{(v,v') \in E} f(v,v'). \]
The properties of a flow imply that the magnitude can also be calculated by summing its values on all edges entering $u$:
\[ |f| = \sum_{v', u\in E} f(v', u).\]

A \textbf{cut} relative to $\left\{v,u \right\}$ of $G$ is a partitioning $(A,B)$ of $V$ so that $v \in A$ and $u \in B$. The \textbf{cut set} $C_{(A,B)}$ of a cut $(A,B)$ is the subset of edges $E$ connecting a vertex of $A$ to a vertex of $B$:
\[ C(A,B) = \left\{ (v_{1}, v_{2}) \in E: v_{1} \in A, v_{2} \in B \right\}. \]

Finally, the \textbf{magnitude of a cut} is equal to the sum of the weights of all edges in the associated cut-set:
\[ |(A,B)| = \sum_{e \in C_{(A,B)}}w(e). \]

The celebrated \textbf{max-flow min-cut theorem} states that the \textit{maximum} magnitude of a flow from $v$ to $u$ is equal to the \textit{minimum} magnitude of a cut relative to $\left\{v, u \right\}$. 

\begin{remark} \label{rem:intuition}
To get some intuition for why these two quantities should be related at all, one can imagine the flow as a ``pulse'' starting at $v$ and ending at $u$. At each stage of the pulse, the flow ``arrives'' at a new cut set, obtained from the previous one by just pushing the flow forward from each vertex $x$ to the vertices that are connected to it by edges emanating from $x$. One can for instance visualize this by imagining that a large collection of tiny stones begins at $v$. Then if edges $e_{1},..., e_{n}$ emanate from $v$ and arrive at vertices $v_{1},..., v_{n}$ respectively, we send a fraction of $w(e_{i})/\sum_{j}w(e_{j})$ of the total number of stones to vertex $v_{i}$. We continue this again and again. The magnitude of the flow is preserved at each stage of the pulse because of the requirement that the net flow remains $0$. This means that we can easily find many cut sets whose magnitude serves as an upper bound for the magnitude of the flow. Indeed, the set of vertices with at least one stone at a given stage comprises a cut set, and the total number of stones must therefore be at most the total weight summed over the vertices in that cut set.   
\end{remark}

With all of this in mind, assume that $\Gamma$ has linear growth and let $v,u$ be a pair of vertices. After assigning each edge of $\Gamma$ a weight of $1$, we can then ask for the maximum magnitude of a flow from $v$ to $u$. The max-flow min-cut theorem says that this is precisely equal to the minimum number of edges required to separate $v$ from $u$. 

Here is the key point: if $B$ is any closed ball about $v$ of some radius $k$ chosen to be less than the distance between $v$ and $u$, then $\partial B$ separates $v$ from $u$. Moreover, linear growth of $\Gamma$ implies that 
\[ \liminf_{k} |\partial B(k, v)| < \infty. \]
To see this, note that
\[ |B(k,v)| = \sum_{j=1}^{k} |\partial B(j,v)|,\]
and linear growth is the statement that the result of dividing the left hand side by $k$ is bounded in the limit. Therefore, the average size of the first $k$ spheres about $v$ is bounded in the limit, and so at least one of those spheres must have bounded size independent of $k$. It follows that the maximum magnitude of a flow is uniformly bounded from above, independent of the distance between $v$ and $u$. 

Now, suppose $u$ and $v$ are very far apart; then there are very many spheres about $u$ of varying radii that separate $u$ from $v$. Moreover, we can pick these spheres such that each has at most some $N$ number of elements, since as we've said above, spheres of bounded size (but with arbitrarily large radius) are abundant.  

We are thus in a situation where, by choosing $u$ and $v$ sufficiently far apart and a non-zero flow from $u$ to $v$, there is a number of spheres $S_{1},..., S_{n}$ about $u$ not containing $v$ and satisfying the following properties:
\begin{itemize}
\item Each $|S_{i}|$ is uniformly bounded above by $N$. 
\item For each $i$, the sum of flow values -- taken over all elements in $S_{i}$-- is uniformly bounded above by some $N'$.
\end{itemize}
Since $N$ and $N'$ are uniform, by increasing $d(u,v)$ we can choose $n$ to be as large as we want without changing $N,N'$. We now use the fact that given $N,N'$, there are only finitely many possible flows on a graph with at most $N$ vertices and with magnitude at most $N'$ (up to flow-preserving graph isomorphism). 

\begin{figure}[h]
    \centering
    \includegraphics[width=0.4\linewidth]{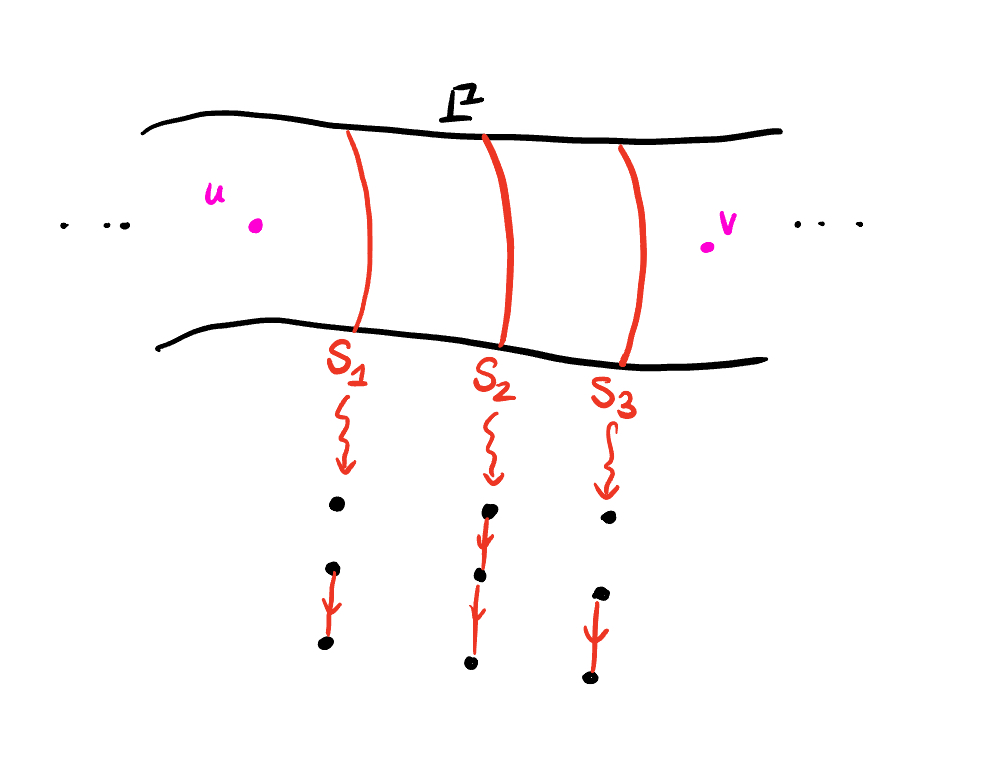}
    \caption{``Cross-Sections" for $S_i$. Each $S_i$  has at most $N$ vertices and the magnitude of the total flow is bounded by $N'$, so making the distance from $u$ and $v$ sufficiently large will guarantee that we will see the same graph (with the same flow) repeat.}
    \label{fig:Cross_Section}
\end{figure}

From this and the pigeonhole principle, one deduces that the directed graph $\Gamma$ must contain a pair of spheres $S,S'$ about $v$ with neighborhoods that are (directed-)graph isomorphic via an isomorphism $\phi$ that preserves the flow. Letting $[S,S']\subset \Gamma$ denote the portion of $\Gamma$ between $S$ and $S'$ (i.e., those vertices that are at least as far away from $v$ as is $S$ and at most as far away from $v$ as is $S'$), we consider a quotient $[S,S']/\sim$ obtained by identifying $S$ and $S'$. Crucially, because this identification can be made in a flow-preserving way, the flow we started with descends to the quotient. 

We now have a finite graph equipped with a map from its edge set to $\left\{0,1\right\}$ satisfying the properties of a (non-zero!) flow, which implies there must exist a cycle (a directed edge assigned with a $1$ must be followed by another edge assigned with a $1$-- repeat this argument until one arrives where one started). Lifting this cycle back to $\Gamma$ yields a path with one endpoint $x \in S$ and the other $y \in S'$, and if $g$ is an element sending $x$ to $y$, one can use the structure of the flow to show that $g$ must have infinite order. This is formalized with basic algebraic topology. Sweeping some of those details under the rug, the key point is that the cycle in $[S,S']/\sim$ corresponds to a (finite-index) subgroup\footnote{In truth, it is more straightforward to see that the connected component of $[S,S]/\sim$ containing the cycle -- or in fact any connected component of the quotient-- corresponds to a finite index subgroup. But in any case, the cycle will in turn be finite index in the subgroup corresponding to the component containing it.} of $G$ and $\langle g \rangle$ is evidently finite index in \textit{it} and therefore also in the full group; finally, $g$ can not be finite order because applying higher and higher powers of it corresponds to winding more and more times around the cycle.

\begin{figure}
    \centering
    \includegraphics[width=0.5\linewidth]{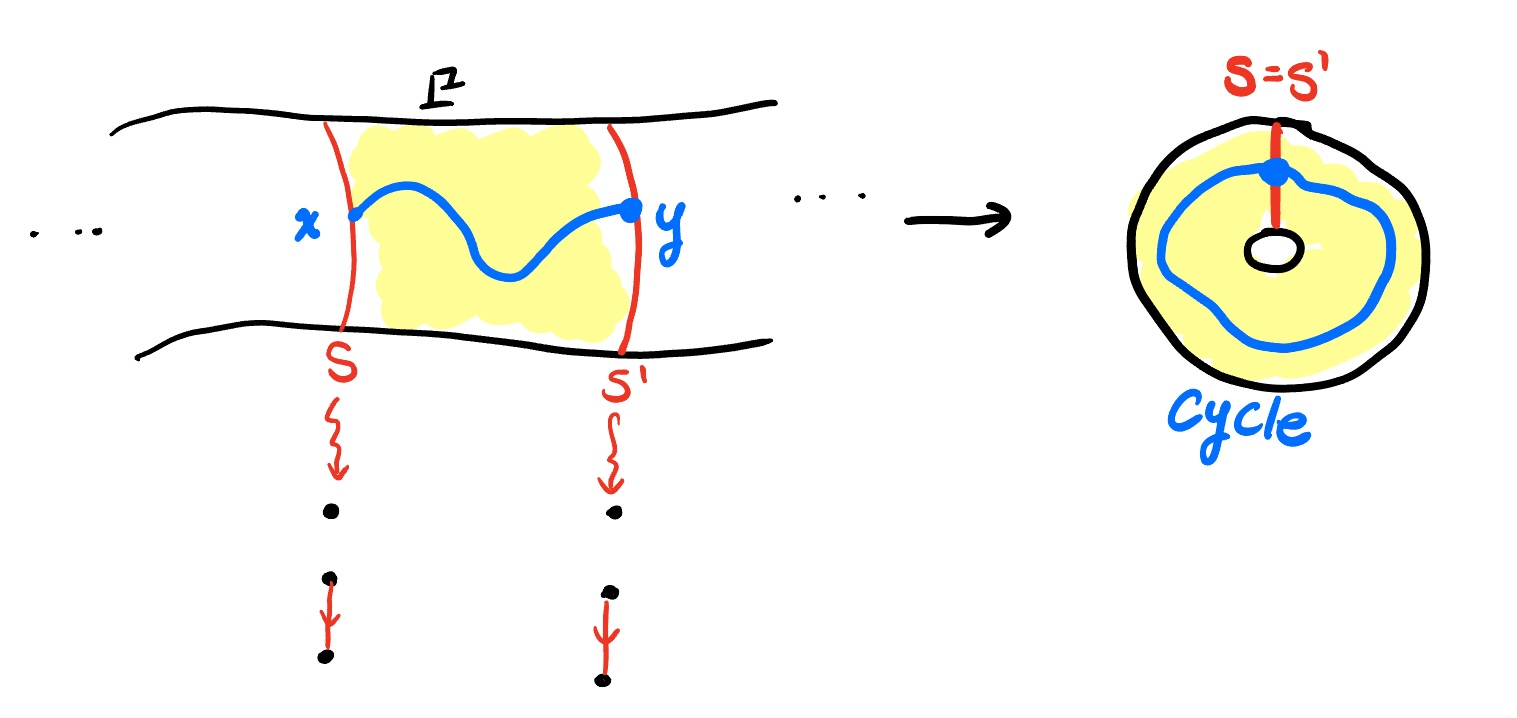}
    \caption{Quotient of the portion of $\Gamma$ between the two identical spheres $S,S'$ with the same flow.}
    \label{fig:Flow_Cycle}
\end{figure}

\section*{Appendix C: Flame's Math Art}

We would like to conclude this paper with some of Flame's own personal mathematical drawings. 

The drawings in Figures 12 and 13 come from Flame's senior thesis project at Haverford College, working under the supervision of the first author. Flame was studying the question of how many pairwise non-isotopic simple closed curves one can draw on the torus such that no two geometrically intersect more than $k$ times. For the interested reader, this problem was studied by the author and Gaster \cite{AG} and completely resolved by Balla-Filokovsk\'{y}-Kielak-Kr\'{a}l-Schlomberg \cite{BFKKS} and Kriepke-Schymura \cite{KS}. The figures in quadrants $I,II$, and $III$ were all in service of a proof of the familiar derterminant-based formula for computing the geometric intersection number between simple closed curves on the torus. The final figure was drawn when Flame was studying standard lifting lemmas in order to pass back and forth between curves and their preimages in the universal cover. The department has since established a prize in their memory and honor, awarded every year to a graduating senior whose thesis sports the most impressive mathematical figures. 

\begin{figure}[h]
    \centering
    \begin{minipage}{0.45\textwidth}
        \centering
        \includegraphics[width=0.9\linewidth]{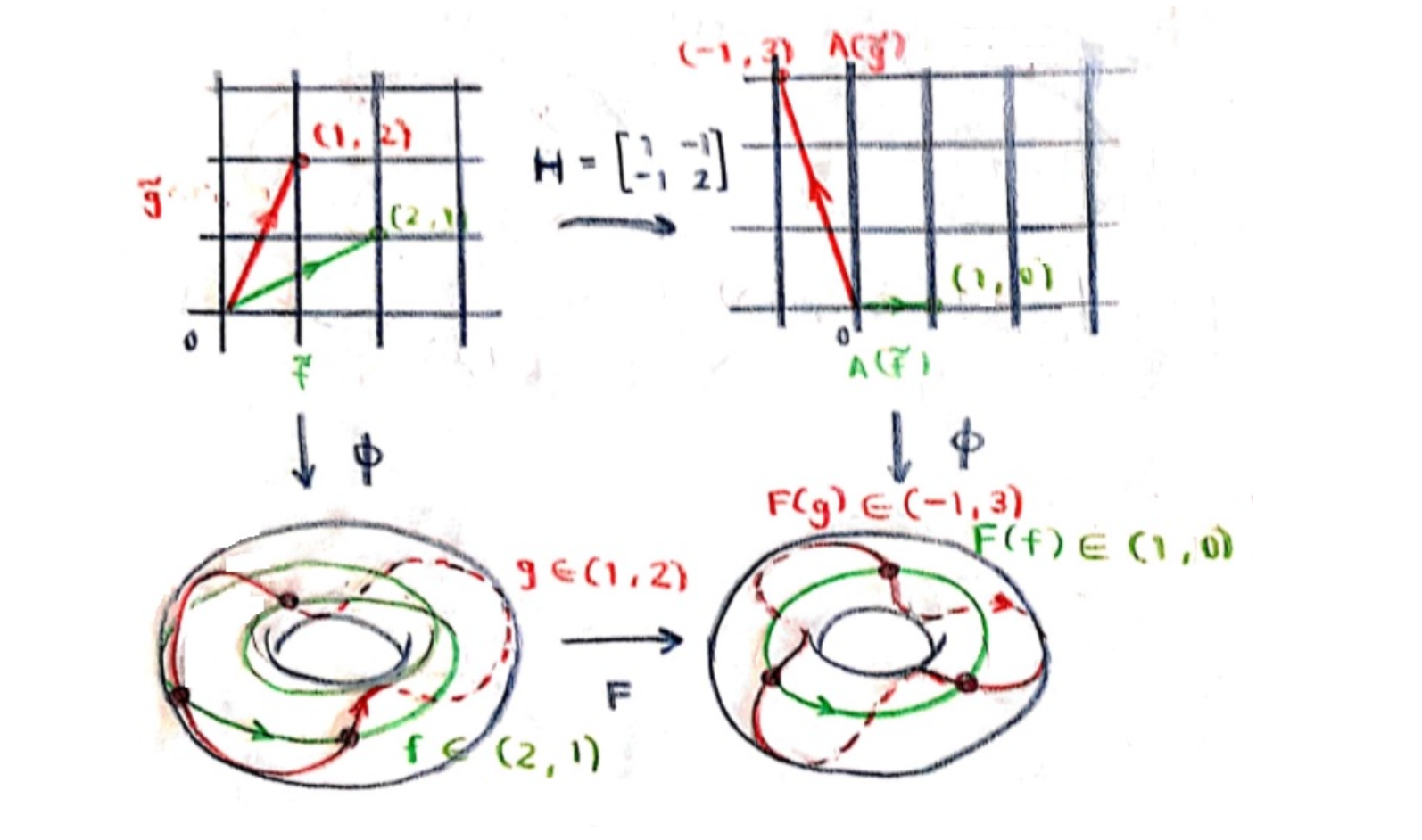}
    \end{minipage}\hfill
    \begin{minipage}{0.45\textwidth}
        \centering
        \includegraphics[width=0.9\linewidth]{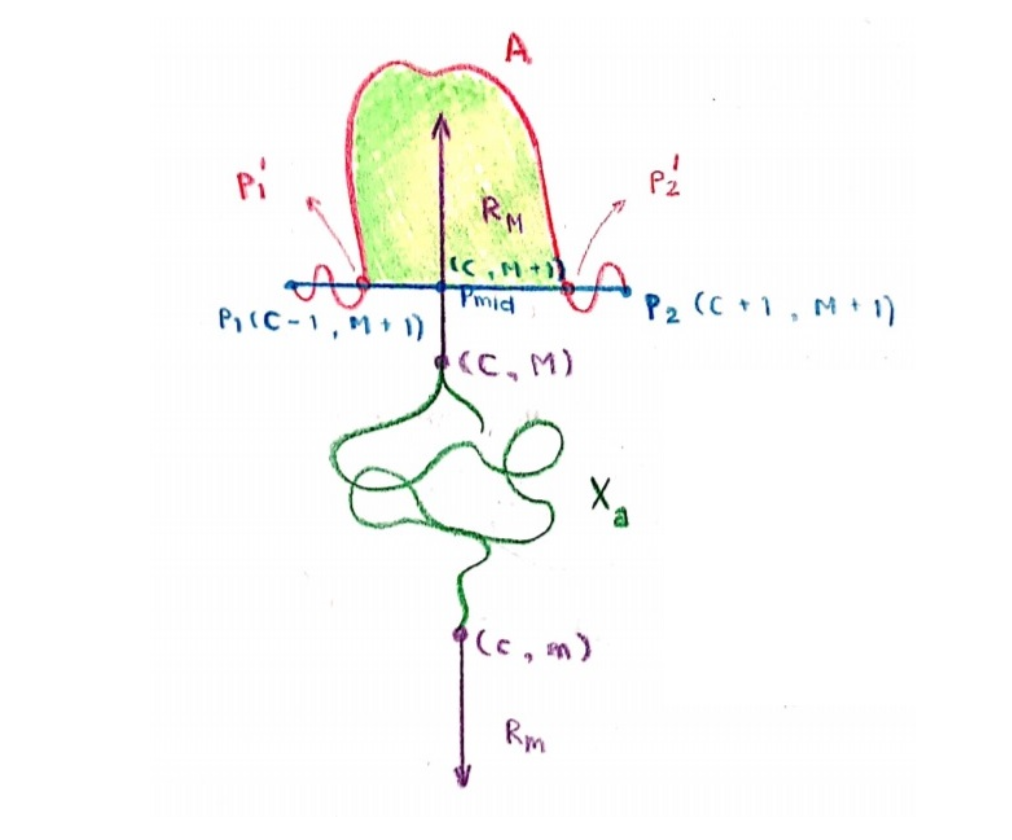}
    \end{minipage}
    \caption{}
    \label{fig:thesis}
\end{figure}

\begin{figure}[h]
    \centering
    \begin{minipage}{0.45\textwidth}
        \centering
        \includegraphics[width=0.9\linewidth]{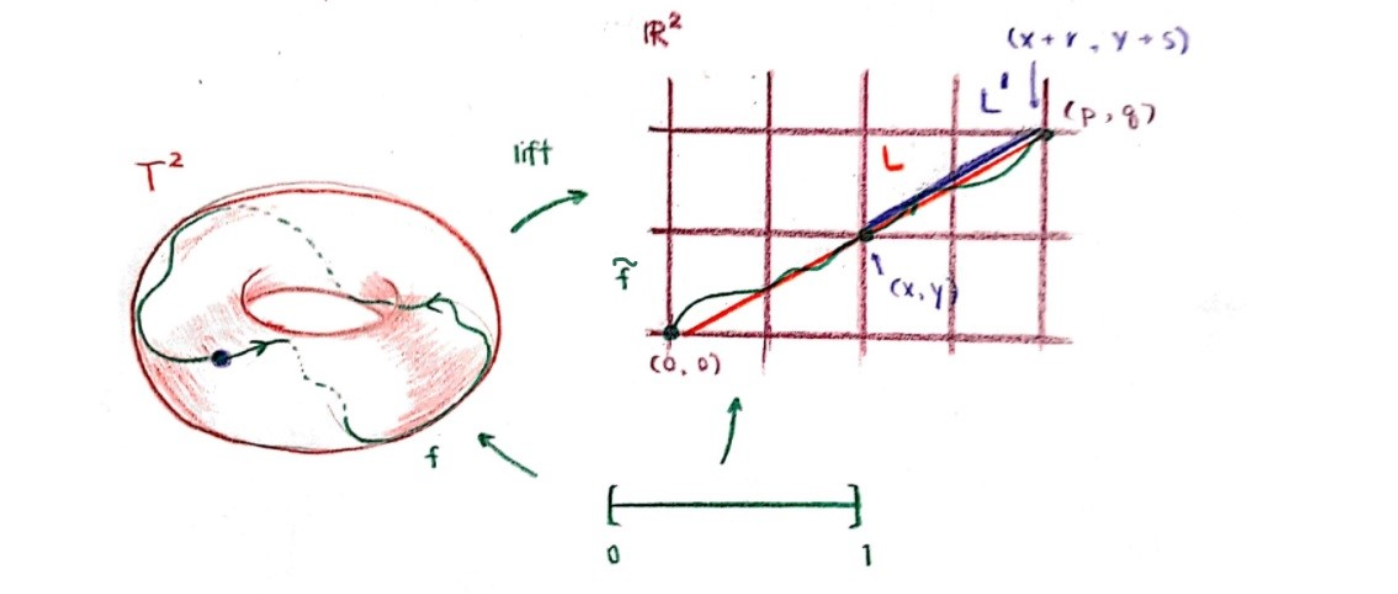}     
    \end{minipage}\hfill
    \begin{minipage}{0.45\textwidth}
        \centering
        \includegraphics[width=0.9\linewidth]{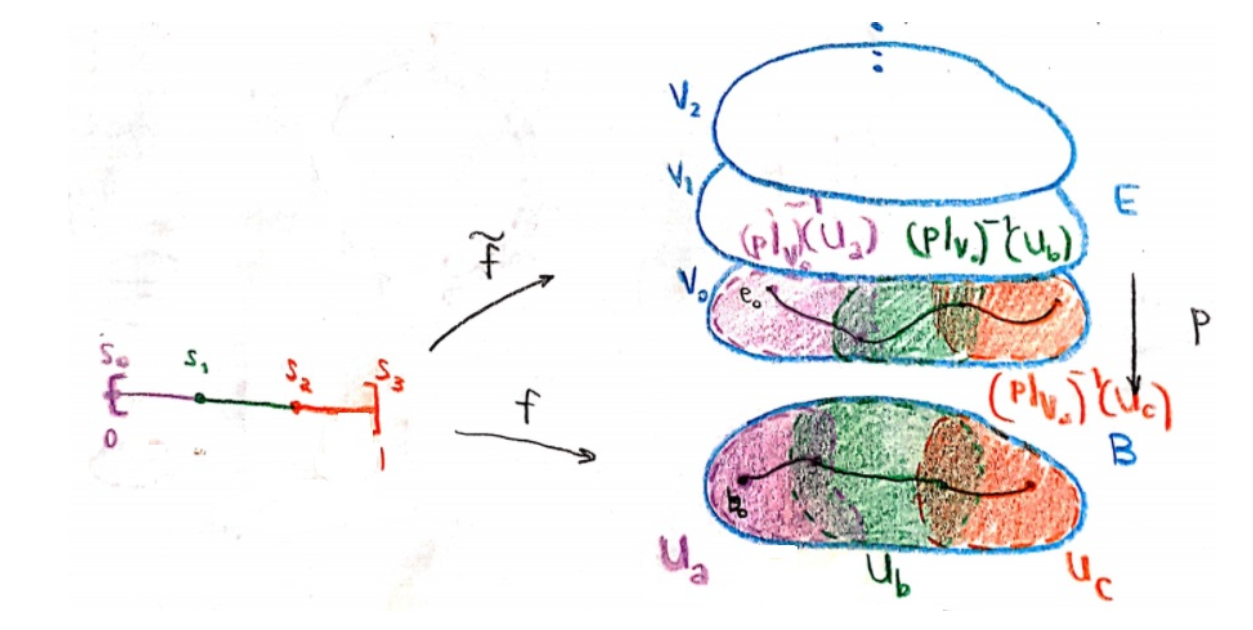}
    \end{minipage}
    \caption{}
    \label{fig:thesis}
\end{figure}

At the beginning of this article, we ask what the Cayley graph of the group $\langle a,b\ |\ ba^2b^{-1}=a\rangle$ looks like. In Figure \ref{fig:BS12}, we have part of the Cayley graph drawn on the left. In this graph, there are infinitely many planes branching off of each other in a tree-like fashion. The figure on the right is Flame's drawing of the geometry inside one of those planes. This picture illustrates the fact that the path along that bottom of length 8 that looks like a straight line is not the shortest path between the end points of that straight line. This implies that the geometry in each of the planes in the Cayley graph is more like the hyperbolic plane than the Euclidean plane. This observation gives us pertinent algebraic information about this group.

\begin{figure}[b]
    \centering
    \begin{minipage}{0.45\textwidth}
        \centering
    \includegraphics[scale=0.2,width=3cm, height=3cm]{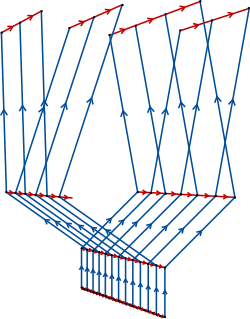}
    \end{minipage}
    \begin{minipage}{0.45\textwidth}
        \centering
        \includegraphics[width=\textwidth]{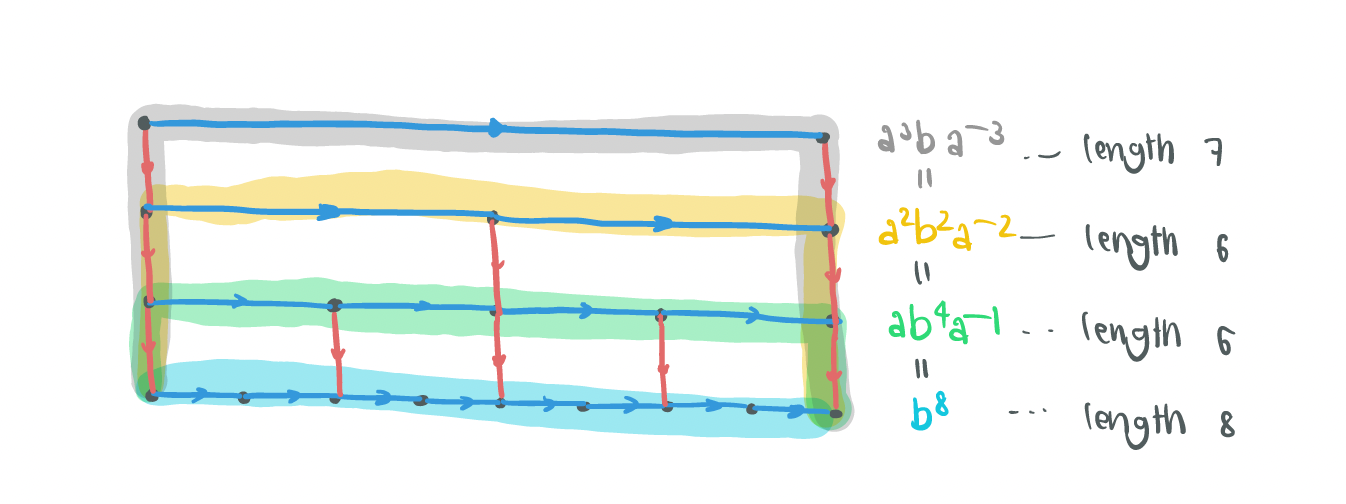}     
    \end{minipage}
    \caption{}
    \label{fig:BS12}
    \end{figure}

Figure \ref{fig:Pictures2} is a labeled copy of the Cayley graph for the group $G=\langle a,b\ |\ a^5=b^2=1\rangle$ drawn by Flame. The vertices are labeled with the corresponding group elements, the green edges correspond to multiplying by $b$ and the brown edges are multiplication by $a$. The group $G$ is the free product of 
$\mathbb Z_5$ and $\mathbb Z_2$. This is an example of a group with infinitely many ends. In fact, Stallings' theorem mentioned in Appendix A implies that any group with infinitely many ends has an algebraic splitting as a free product with amalgamation over a finite subgroup. A free product is a special case of this where the amalgamation is done over the trivial subgroup.

\begin{figure}[b]
    \centering
    \includegraphics[width=0.5\textwidth]{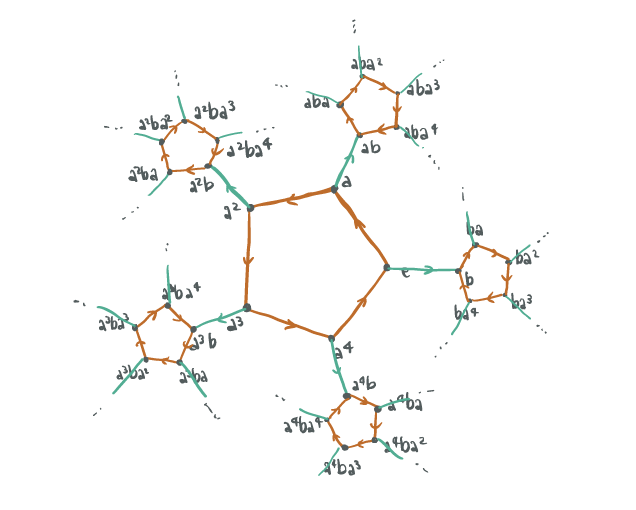}
    \caption{}
    \label{fig:Pictures2}
    \end{figure}

\newpage
\printbibliography
\vfill
\end{document}